\documentclass[12pt,english]{amsart}

\usepackage[T1]{fontenc}
\usepackage[utf8]{inputenc}
\usepackage[main=english]{babel}

\usepackage{geometry}
\usepackage{amsmath,amsthm,amssymb,amsfonts,mathtools}
\usepackage{mathrsfs}
\usepackage{esint}

\usepackage{graphicx}
\usepackage{float}
\usepackage{tikz}

\usepackage{comment}
\usepackage{indentfirst}
\usepackage[hidelinks]{hyperref}

\theoremstyle{definition}
\newtheorem{theorem}{Theorem}[section]
\newtheorem{lemma}[theorem]{Lemma}
\newtheorem{remark}[theorem]{Remark}

\newtheorem{proposition}[theorem]{Proposition}
\newtheorem{conjecture}[theorem]{Conjecture}

\newcommand{\Ztwo}{\mathbb Z_2}
\newcommand{\cZ}{\mathcal Z}
\newcommand{\cC}{\mathcal C}
\newcommand{\cH}{\mathcal H}
\newcommand{\Mmass}{\mathbf M}

\title{On filtration formula and nodal sets in min-max theory}
\author{Talant Talipov}
\date{}

\begin{document}

\begin{abstract}
Let $(M^{n+1},g)$ be a closed smooth Riemannian manifold, and let
\[
0=\lambda_0\leq\lambda_1\leq\ldots
\]
be the spectrum of the Laplace--Beltrami operator, with corresponding
real eigenfunctions $\{\phi_j\}_{j\geq0}$.  Let $\{\omega_p(M,g)\}_{p\geq1}$ denote
Gromov's volume spectrum.
In 2014, Marques--Neves conjectured that the sweepout generated by
the nodal sets of $\phi_0,\ldots,\phi_p$ is asymptotically optimal for $\omega_p(M,g)$ as $p\to\infty$.
We show that this conjecture is false in every dimension, even for
positively curved real-analytic metrics.

Nevertheless, we show that a less rigid notion of nodal sweepouts recovers the volume spectrum.  Let
\[
E_N(M,g):=\operatorname{span}\{\phi_0,\ldots,\phi_N\},
\qquad
a_N\in H^1\bigl(\mathbb P(E_N(g));\mathbb Z_2\bigr)
\]
be the generator, and for $p\geq1$ define
\[
\mathcal N_p^N(g)
:=
\left\{
\Psi \mid
X\text{ is a finite complex},\
\Psi\in C\bigl(X,\mathbb P(E_N(g))\bigr),\
\Psi^*(a_N^p)\neq0
\right\}.
\]
We define the nodal $(p,N)$-width by
\[
\nu_p^N(M,g)
:=
\inf_{\Psi\in\mathcal N_p^N(g)}
\sup_{x\in\operatorname{dmn}(\Psi)}
\mathcal H_g^{n}\bigl(\{\Psi_x=0\}\bigr).
\]
We prove the following filtration formula
\[
\omega_p(M,g)
=
\lim_{N\to\infty}\nu_p^N(M,g)
\]
for every $p\geq1$. We state several questions on nodal geometry and min-max theory.
\end{abstract}

\maketitle

\section{Introduction}

Let $(M^{n+1},g)$ be a connected closed smooth Riemannian manifold.  The
Laplace--Beltrami operator has a discrete spectrum
\[
0=\lambda_0\leq\lambda_1\leq\lambda_2\leq\ldots,
\]
where the eigenvalues are repeated with multiplicity.  We fix an
$L^2(M,g)$-orthonormal real eigenbasis
$\{\phi_j\}_{j\geq0}$, so that
\[
-\Delta_g\phi_j=\lambda_j\phi_j.
\]

The Laplace spectrum admits a variational characterization.  If
\[
\mathcal R_g(f)
:=
\frac{\displaystyle\int_M|\nabla f|_g^2\,dV_g}
{\displaystyle\int_M f^2\,dV_g}
\]
denotes the Rayleigh quotient, then
\[
\lambda_p
=
\inf_{\substack{V\subset H^1(M)\\ \dim V=p+1}}
\ \sup_{0\neq f\in V}
\mathcal R_g(f).
\]

Gromov \cite{Gromov} introduced a nonlinear analogue of the Laplace--Beltrami spectrum. 
Let
\[
\mathcal B(M;\mathbb Z_2)
:=
\left\{
\partial\Omega:
\Omega\in\mathcal C(M)
\right\}
\]
denote the space of boundaries of Caccioppoli sets in $M$, endowed
with the flat topology. Almgren's isomorphism theorem \cite{Almgren} implies that
$\mathcal B(M;\mathbb Z_2)$
is weakly homotopy equivalent to $\mathbb{RP}^\infty$. In particular,
\[
H^*\bigl(\mathcal B(M;\mathbb Z_2);\mathbb Z_2\bigr)
\simeq
\mathbb Z_2[\bar\lambda],
\]
where $\bar\lambda$ is the generator. For $p\geq1$, define
\[
\mathcal P_p(M)
:=
\left\{
\Phi \mid
X^p\text{ is a finite complex},\
\Phi\in C\bigl(X,\mathcal B(M;\mathbb Z_2)\bigr),\
\Phi^*(\bar\lambda^p)\neq0
\right\}.
\]
Elements of $\mathcal P_p(M)$ are called \textit{$p$-sweepouts}.
The \textit{$p$-width} of $(M,g)$ is
\[
\omega_p(M,g)
:=
\inf_{\Phi\in\mathcal P_p(M)}
\sup_{x\in\operatorname{dmn}(\Phi)}
\mathbf M_g\bigl(\Phi(x)\bigr).
\]
The sequence
$\{\omega_p(M,g)\}_{p\geq1}$ is called the \emph{volume
spectrum}.

The regularity theory
of Almgren \cite{Almgren1}, Pitts \cite{Pitts} and Schoen--Simon \cite{SchoenSimon}
implies that, in
ambient dimensions $3\leq n+1\leq7$, each $\omega_p(M,g)$ is the mass of
a stationary integral varifold of the form
\begin{equation}
\label{eq:minmax-realization}
V_p=\sum_{j=1}^k m_j|\Sigma_j|,
\end{equation}
where the $\Sigma_j$ are closed smooth embedded minimal hypersurfaces
and $m_j$ are positive integers. In higher dimensions the same
statement holds with optimal regularity, namely, with a singular set of
Hausdorff dimension at most $n-7$.

The volume spectrum has played a central role in the resolution of
Yau's abundance conjecture \cite{Yau}.  Marques--Neves \cite{MN} first
proved the existence of infinitely many closed embedded
minimal hypersurfaces on manifolds of positive Ricci curvature in dimensions
$3\leq n+1\leq7$.
Irie--Marques--Neves \cite{IMN} proved the
conjecture for $C^\infty$-generic metrics in dimensions
$3\leq n+1\leq7$. Finally, Song \cite{Song} proved
the conjecture for every closed Riemannian manifold of dimension
$3\leq n+1\leq7$.  In ambient dimensions $n+1\geq8$,
Li \cite{LiHigher} proved that a closed manifold equipped with a
$C^\infty$-generic metric contains infinitely many minimal hypersurfaces
with optimal regularity. Moreover, Li--Wang \cite{LiWang}
proved that every closed $8$-manifold with a $C^\infty$-generic metric
contains infinitely many smooth closed embedded minimal hypersurfaces 
(see also Chodosh--Liokumovich--Spolaor \cite{CLS} and Wang \cite{WangDeformations}).

The analogy between spectral geometry and min-max theory is supported by a series of results. The classical
Weyl law \cite{Weyl} gives
\[
\lambda_p
\sim
4\pi^2
\left(
\frac{p}{\mu_{n+1}\operatorname{Vol}_g(M)}
\right)^{2/(n+1)},
\]
where $\mu_{n+1}$ is the volume of the Euclidean unit $(n+1)$-ball.
First, Gromov \cite{GromovWaist}, Guth \cite{Guth} and Marques--Neves \cite{MN} established the Weyl-type
growth order of the volume spectrum:
\[
c(M,g)\,p^{1/(n+1)}\leq \omega_p(M,g)\leq C(M,g)\,p^{1/(n+1)}.
\]
Liokumovich--Marques--Neves \cite{LMN} proved the corresponding
Weyl law for the volume spectrum
\[
\omega_p(M,g)
\sim
a_n\,\operatorname{Vol}_g(M)^{n/(n+1)}p^{1/(n+1)},
\]
for a universal dimensional constant $a_{n}>0$.

Uhlenbeck \cite{Uhlenbeck} proved that for a generic metric all
Laplace eigenvalues are simple.  On the min-max side, Zhou's
multiplicity one theorem \cite{Zhou} (see also Chu--Li's strong multiplicity one theorem \cite{ChuLiMultiplicity}) states that, for a bumpy metric
and $3\leq n+1\leq7$, the weights in \eqref{eq:minmax-realization} for
$\omega_p(M,g)$ can be chosen with
\[
m_1=\ldots=m_k=1.
\]

If $\lambda_p$ is simple, then $\phi_p$ has Morse index $p$ as a critical
point of the Rayleigh quotient on the $L^2$-unit sphere.  Analogously,
Marques--Neves \cite{MNIndex} proved that, for
$3\leq n+1\leq7$, the realization \eqref{eq:minmax-realization} can be
chosen so that
\[
\sum_{j=1}^k \operatorname{index}(\Sigma_j)\leq p.
\]
Zhou \cite{Zhou} strengthened this to the weighted bound
\[
\sum_{\Sigma_j\ {\rm orientable}}
m_j\,\operatorname{index}(\Sigma_j)
+
\sum_{\Sigma_j\ {\rm nonorientable}}
\frac{m_j}{2}\,\operatorname{index}(\Sigma_j)
\leq p.
\]
Li \cite{LiIndex} proved the corresponding index bound in arbitrary
dimension for min-max minimal hypersurfaces with optimal regularity.
Moreover, for a generic metric and $3\leq n+1\leq7$,
Marques--Neves \cite{MNExactIndex} proved that the realization
\eqref{eq:minmax-realization} can be chosen so that
\[
m_1=\ldots=m_k=1,
\qquad
\sum_{j=1}^k \operatorname{index}(\Sigma_j)=p.
\]

The nodal set of a non-constant eigenfunction $\phi_j$ is
$C(M,g)\lambda_j^{-1/2}$-dense in $M$. On the minimal-hypersurfaces theory side, Irie--Marques--Neves
\cite{IMN} proved that, for a generic metric and $3\leq n+1\leq7$, the
union of all closed embedded minimal hypersurfaces is dense in $M$.
Moreover, Marques--Neves--Song \cite{MNS} proved that, for a
generic metric and $3\leq n+1\leq7$, there exists a sequence of
closed embedded minimal hypersurfaces which equidistributes in $M$. A similar equidistribution result about nodal sets of Laplace--Beltrami eigenfunctions
was proved by Han \cite{HanNodal}.

Motivated by these analogies, Marques--Neves \cite{MN,NevesICM,MarquesICM} proposed connecting the two
theories more directly through nodal sets of Laplace eigenfunctions. The nodal geometry of Laplace--Beltrami eigenfunctions has an extensive literature; see, among many others,
\cite{Cheng,Bruning,Nadirashvili,DonnellyFefferman,
DonnellyFeffermanSurfaces,HardtSimonNodal,DongNodal,
ColdingMinicozzi,SoggeZelditch,SoggeZelditchII,
Steinerberger,LogunovUpper,LogunovLower,
LogunovMalinnikovaNadirashviliNazarov}
and the surveys \cite{ZelditchSurvey,LogunovMalinnikovaReview}.
Nodal sets of finite sums of Laplace--Beltrami
eigenfunctions have also been studied in
\cite{Lin91,DonnellySums,JerisonLebeau,BBKH,Decio,NazarovSodin,BeliaevWigman,SarnakWigman,CanzaniSarnak,SartoriWigman,WigmanSurvey}. For
\[
E_N(g):=\operatorname{span}\{\phi_0,\ldots,\phi_N\},
\]
consider the family
\begin{equation}
\label{eq:MN-nodal-sweepout}
\begin{split}
\mathbb{RP}^p
&\longrightarrow
\mathcal B(M;\mathbb Z_2),\\
[\theta_0:\ldots:\theta_p]
&\longmapsto
\partial
\left\{
\theta_0\phi_0+\ldots+\theta_p\phi_p<0
\right\}.
\end{split}
\end{equation}
Set
\[
\Phi_p(M,g)
:=
\sup_{0\neq f\in E_p(g)}
\Mmass_g\bigl(\partial\{f<0\}\bigr).
\]
Beck--Becker-Kahn--Hanin \cite{BBKH} proved that
\eqref{eq:MN-nodal-sweepout} is indeed an admissible $p$-sweepout. Thus,
\[
\omega_p(M,g)\leq\Phi_p(M,g).
\]
Marques--Neves \cite{MN,NevesICM,MarquesICM} posed the following conjecture.

\begin{conjecture}[Marques--Neves' asymptotic optimality conjecture, \cite{MN,NevesICM,MarquesICM}]
\label{conj:asymptotic-optimality}
As $p\to\infty,$
\[
\frac{\Phi_p(M,g)}{\omega_p(M,g)}
\longrightarrow 1.
\]
\end{conjecture}

We show that Conjecture \ref{conj:asymptotic-optimality} fails in every ambient dimension $n+1 \geq 2$, even for positively curved real-analytic metrics.

\begin{theorem}
\label{prop:counterexample-asymptotic-optimality}
For every $n\geq 1$ there exists $0<\varepsilon<1$ such that the ellipsoid
\[
M_\varepsilon
=
\left\{
(x,z)\in\mathbb R^{n+1}\times\mathbb R:
|x|^2+\frac{z^2}{\varepsilon^2}=1
\right\},
\]
with the induced metric $g_\varepsilon$, satisfies
\[
\limsup_{p\to\infty}
\frac{\Phi_p(M_\varepsilon,g_\varepsilon)}
{\omega_p(M_\varepsilon,g_\varepsilon)}>1.
\]
\end{theorem}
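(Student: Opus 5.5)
Since $\omega_p(M_\varepsilon,g_\varepsilon)\sim a_n\operatorname{Vol}(M_\varepsilon)^{n/(n+1)}p^{1/(n+1)}$ by the Weyl law of Liokumovich--Marques--Neves \cite{LMN}, I will exhibit, for small $\varepsilon$, an infinite sequence of $p$ along which $\Phi_p(M_\varepsilon,g_\varepsilon)\geq c\,p^{1/(n+1)}$ with $c$ strictly larger than $a_n\operatorname{Vol}(M_\varepsilon)^{n/(n+1)}$. Lower bounds for $\Phi_p$ are easy: it is enough to find a single $f\in E_p(g_\varepsilon)$ with a large nodal set. I will use the eigenfunctions that do not depend on $z$ in the collapsed direction. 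As $\varepsilon\to0$ the ellipsoid collapses to the unit ball $B^{n+1}\subset\mathbb R^{n+1}$, counted twice (upper and lower sheets). Functions that are even in $z$ correspond to a weighted Neumann-type problem on the ball. I will use the rotational symmetry in $x$: it preserves $M_\varepsilon$, so the Laplacian commutes with $O(n+1)$. I will pick eigenfunctions of the form $\phi(x,z)=Y_k(x/|x|)\,h(|x|,z)$, or even simpler, zonal/spherical-harmonic-like functions whose nodal sets contain the whole vertical "walls" $\{Y_k(x/|x|)=0\}\cap M_\varepsilon$.

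The key heuristic: $\operatorname{Vol}(M_\varepsilon)\to 2\mu_{n+1}$, so $a_n\operatorname{Vol}^{n/(n+1)}p^{1/(n+1)}$ stays bounded in terms of counting. The nodal sets of suitable low-mode functions on the degenerate ellipsoid behave like nodal sets on a two-sheeted ball, but with multiplicity two. Concretely, the function $f=\ell(x)=x_1$ (first eigenfunction, up to $O(\varepsilon)$ corrections; in fact for the ellipsoid the coordinate functions $x_i$ restricted are eigenfunctions, by symmetry $x_i\mapsto -x_i$ in $E_{n+1}$) has nodal set of area $\approx 2\mu_n$. Meanwhile $\omega_1(M_\varepsilon)$ is governed by the equator $\{z=0\}$, whose area is $\approx \varepsilon$-small... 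The statement, however, concerns $p\to\infty$, so I need a scalable version. The plan is to build, for each $p$ in a sequence, an eigenfunction in $E_p$ whose nodal set consists of many "vertical" pieces, each counted on both sheets. I will take $f_k(x,z)$ to be eigenfunctions approximating, as $\varepsilon\to0$, eigenfunctions $u_k$ of the limiting weighted operator on $B^{n+1}$ (even in $z$). Their nodal set in $M_\varepsilon$ converges to twice the nodal set of $u_k$ in $B$, i.e. $\mathcal H^n\approx 2\mathcal H^n(\{u_k=0\})$. Because the eigenvalue counting for $M_\varepsilon$ involves also the $z$-odd and high-$z$-frequency modes only at scale $\varepsilon^{-1}$, the index $p$ of $u_k$ in $M_\varepsilon$ is comparable to its index $k$ in the limiting ball problem for $k\ll\varepsilon^{-(n+1)}$.

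Then I need a family $u_k$ on $B^{n+1}$ with $\mathcal H^n(\{u_k=0\})\geq c'k^{1/(n+1)}$ and $2c'>a_n(2\mu_{n+1})^{n/(n+1)}$. The main obstacle is the quantitative comparison: I must take $p\to\infty$ for fixed $\varepsilon$, yet the limiting picture only holds for $p\ll\varepsilon^{-(n+1)}$. So I will instead argue differently in the limsup. I fix a large $p_0$ and $\varepsilon$ small depending on $p_0$. Separately, I use the upper bound $\omega_p\leq C p^{1/(n+1)}$ only asymptotically, so I need a genuinely asymptotic lower bound on $\Phi_p$. A cleaner route is to use products: pick $f\in E_{p}$ with $f=u(x)$ a high-degree function like $\prod$ of linear forms, or a spherical harmonic $Y_k(x)$ extended. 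On the ellipsoid, for fixed $\varepsilon$, $x$-spherical harmonics of degree $k$ times radial profiles are eigenfunctions with $\lambda\sim k^2$, and their nodal sets are cones $\{Y_k=0\}$ through the whole ellipsoid, of area $\sim 2\mathcal H^{n}(\text{cone in }B)$. The number of eigenvalues below $k^2$ is $\sim c\,\operatorname{Vol}\,k^{n+1}$, so $p\sim c k^{n+1}$. Meanwhile a product of $m$ linear forms vanishing on $m$ hyperplanes (achievable by zonal harmonics $Y_k$ with $\sim k$ nodal cones) gives area $\sim 2k\mu_n$ per hemisphere-pair. This yields $\Phi_p\gtrsim 2\mu_n c^{-1/(n+1)}p^{1/(n+1)}$. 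Finally I would compare constants with $a_n(2\mu_{n+1})^{n/(n+1)}$ in the limit $\varepsilon\to0$, using the known upper bound for $a_n$ from cube-slicing sweepouts. Checking that this constant inequality actually holds, and choosing the best family of cones (possibly harmonics with many radial and angular nodal components) to win it, is the decisive and hardest step.
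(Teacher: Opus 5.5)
\paragraph{Where your proposal matches the paper.} Your lower bound for $\Phi_p$ follows the paper's route. For fixed $\varepsilon$ you separate variables, take the lowest eigenfunction $u_k=a_k(t)Y_k(\xi)$ in the sector of a degree-$k$ harmonic on $S^n$ (so $a_k>0$ and the eigenvalue is $k^2(1+o(1))$), and count its index by Weyl's law. This correctly avoids the order-of-limits problem you worried about.

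\paragraph{Corrections to the lower bound.} First, the choice of harmonic matters. Since $g_\varepsilon=q_\varepsilon(t)^2dt^2+\sin^2t\,g_{S^n}$ is a warped product, $\mathcal H^n(\{u_k=0\})=\mathcal H^{n-1}(\{Y_k=0\})\int_0^\pi q_\varepsilon\sin^{n-1}t\,dt$. You therefore need the sectoral harmonic $Y_k=\operatorname{Re}(\xi_1+i\xi_2)^k$, whose nodal set is $k$ great $(n-1)$-spheres. Then $\{u_k=0\}$ consists of $k$ flat meridional sheets, each of area tending to $2\mu_n$. Zonal harmonics vanish on latitude spheres of total measure only about $k\sigma_n/\pi$, where $\sigma_j:=\mathcal H^j(S^j)$. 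For $n\geq2$ this loses a factor $\sigma_n/(\pi\sigma_{n-1})<1$, which the thin margin below cannot absorb; for $n=2$ you would end up with $3/4$.

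Second, $\Phi_p$ is defined through $\mathbf M(\partial\{f<0\})$. You therefore need Proposition~\ref{prop:bbkh-equality}, or the observation that $u_k$ changes sign across every sheet.

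Third, two of your asides are false, though unused: $x_i|_{M_\varepsilon}$ are not eigenfunctions for $\varepsilon\neq1$, and $\{z=0\}\cap M_\varepsilon$ has area $\sigma_n$, not $O(\varepsilon)$.

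\paragraph{The genuine gap.} The gap is the step you defer. Everything reduces to the constant inequality
\[
a_n<2\pi\,\mu_n\,\mu_{n+1}^{-(n+2)/(n+1)}.
\]
Equivalently, $\beta_n<\mu_n/\mu_{n+1}$, where $\beta_n:=\lim_p\omega_p/(\operatorname{Vol}\sqrt{\lambda_p})=a_n\mu_{n+1}^{1/(n+1)}/(2\pi)$. Your proposal supplies no upper bound on $a_n$ that achieves this.

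The cube route you suggest does not. Zero sets of polynomials of degree $\le d$ on the unit cube, combined with Crofton, give only $a_n\le(n+1)\bigl((n+1)!\bigr)^{1/(n+1)}$. Already for $n=1$ this is $2\sqrt2$, which exceeds the threshold $4/\sqrt\pi$.

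\paragraph{The missing idea.} You need to calibrate $\beta_n$ on the round $S^{n+1}$. There the nodal sweepout by $E_p$ is admissible (Beck--Becker-Kahn--Hanin). Gichev's bound gives $\mathcal H^n(\{f=0\})\le k\sigma_n$ for every combination $f$ of spherical harmonics of degree $\le k$, and $\sqrt{\lambda_p}\sim k$. Hence $\beta_n\le\sigma_n/\sigma_{n+1}$. With this, the $\varepsilon\to0$ limit of your ratio is at least
\[
\frac{\mu_n\sigma_{n+1}}{\mu_{n+1}\sigma_n}=\frac{n+1}{n}\,\frac{\sigma_{n+1}\sigma_{n-1}}{\sigma_n^2}=:C_n,
\]
and $C_n>1$ is exactly the strict log-convexity of $\Gamma$. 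Since $C_1=4/\pi$, $C_2=3\pi/8$ and $C_n\to1$, the margin is thin. Only a comparison sweepout whose nodal geometry is as efficient as the sphere's will close the argument.
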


Beck--Becker-Kahn--Hanin \cite{BBKH} asked whether the
inequality
\[
\Phi_p(M,g)
\leq
\sup_{f\in E_p(g)\setminus\{0\}}
\cH_g^{n}(\{f=0\})
\]
can be strict.  As an ingredient in the proof of
Theorem~\ref{prop:counterexample-asymptotic-optimality}, we answer this
question negatively (see the definition of $\nu_p^N(M,g)$ below).

\begin{proposition}
\label{prop:bbkh-equality}
For every $p\geq1$,
\[
\Phi_p(M,g)
=
\sup_{f\in E_p(g)\setminus\{0\}}
\cH_g^{n}(\{f=0\})
=
\nu_p^p(M,g).
\]
\end{proposition}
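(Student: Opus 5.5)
The plan is to prove the two equalities separately. First, $\Phi_p=\sup_f\cH^n(\{f=0\})$; second, $\sup_f\cH^n(\{f=0\})=\nu_p^p$, where $\nu_p^p$ is the infimum over $\Psi\in\mathcal N_p^p(g)$.

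For the first equality, the inequality $\le$ is the one already noted: $\partial\{f<0\}$ is supported in $\{f=0\}$ and has multiplicity at most one. So I concentrate on $\ge$. Every nonzero $f\in E_p(g)$ is a finite sum of eigenfunctions, hence real-analytic and satisfying a unique continuation property. Therefore $\{f=0\}$ is a real-analytic set of dimension at most $n$ (Lin, Donnelly--Fefferman type results for finite sums), of finite $\cH^n$-measure, and it decomposes into two pieces:
\begin{itemize}
\item the part $Z^{\rm sign}$ where $f$ changes sign;
\item the part $Z^{\rm even}$ where $f$ vanishes without changing sign (e.g.\ $f\ge0$ near the point).
\end{itemize}
Only $Z^{\rm sign}$ is seen by $\partial\{f<0\}$, up to $\cH^n$-null sets. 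The key idea is perturbation inside $E_p$: take $f_t=f+t\phi_0=f+t\,{\rm Vol}^{-1/2}$, i.e.\ add a constant. For all but countably many $t$, the set $\{f=-t\}$ has no even-vanishing $n$-dimensional part. Indeed, $\{f=-t\}\cap\{\nabla f=0\}$ has positive $\cH^n$ measure only for countably many $t$, since $f$ is constant on each connected analytic stratum of the critical set (a Sard-type fact for analytic functions). At such regular levels, $\{f=-t\}$ is, up to $\cH^n$-null sets, a smooth hypersurface where $f$ changes sign. Hence
\[
\Mmass(\partial\{f_t<0\})=\cH^n(\{f_t=0\}).
\]
Letting $t\to0$ through such values, lower semicontinuity of $\cH^n$ of level sets is not automatic, so I need $\liminf_{t\to0}\cH^n(\{f=-t\})\ge\cH^n(\{f=0\})$.

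This is the main obstacle. At sign-changing points the nearby levels converge, and each even-vanishing sheet is approximated from one side by two sheets of $\{f=-t\}$ (for small $t$ of the appropriate sign). So I would prove, via the coarea formula together with Łojasiewicz/analytic structure (locally $f=u^{2k}v$ near generic points of $Z^{\rm even}$, with $v\neq0$), that
\[
\liminf_{t\to0^\pm}\cH^n(\{f=t\})\ge\cH^n(Z^{\rm sign})+2\,\cH^n(Z^{\rm even}_\pm)\ge\cH^n(\{f=0\}),
\]
where $Z^{\rm even}_\pm$ is the even part on which $f$ has sign $\pm$. I would choose the sign of $t$ to favor the larger of $Z^{\rm even}_+$ and $Z^{\rm even}_-$, or average the two liminfs. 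Averaging gives
\[
\frac12\sum_\pm\liminf\ge\cH^n(Z^{\rm sign})+\cH^n(Z^{\rm even}),
\]
so one of the two signs works. This yields $\Phi_p\ge\sup_f\cH^n(\{f=0\})$.

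For the second equality, the inclusion $\mathrm{id}:\mathbb P(E_p)\to\mathbb P(E_p)$ lies in $\mathcal N_p^p(g)$, since $a_p^p\neq0$ on $\mathbb{RP}^p$. This gives $\nu_p^p\le\sup_f\cH^n(\{f=0\})$. For the reverse, any $\Psi:X\to\mathbb P(E_p)$ with $\Psi^*(a_p^p)\neq0$ must be surjective. Otherwise $\Psi$ lands in $\mathbb{RP}^p$ minus a point, which deformation retracts onto $\mathbb{RP}^{p-1}$, where $a_p^p=0$. Surjectivity means the sup over $x$ of $\cH^n(\{\Psi_x=0\})$ equals the sup over all $[f]$, giving equality.
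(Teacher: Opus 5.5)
Your architecture matches the paper's. You perturb $f$ by constants, which lie in $E_p(g)$ because $\phi_0$ is constant. You use levels $\pm s$ that are regular values, so that $\cH^n(f^{-1}(\pm s))=\Mmass(\partial\{f<\pm s\})\leq\Phi_p(M,g)$, and you compare $\cH^n(\{f=0\})$ with the average over the two signs. Your treatment of the easy inequality and of the second equality (identity map, and surjectivity via $\mathbb{RP}^p\setminus\{q\}\simeq\mathbb{RP}^{p-1}$) is exactly the paper's.

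\textbf{The gap.} It lies in the step you yourself flag as the main obstacle. Your proof of $\liminf_{t\to0^\pm}\cH^n(\{f=t\})\geq\cH^n(Z^{\mathrm{sign}})+2\cH^n(Z^{\mathrm{even}}_\pm)$ rests on $f$ being real-analytic: the \L{}ojasiewicz structure and the local normal form $f=u^{2k}v$ with $v\neq0$ at generic points. But $g$ is only $C^\infty$, and eigenfunctions of a smooth metric are in general only smooth. The claim ``finite sum of eigenfunctions, hence real-analytic'' holds only for real-analytic $g$.

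For smooth $g$, Lin's theorem gives only finite vanishing order. Via B\"ar's Lemma~3 this yields only that $\{f=0\}$ is countably $n$-rectifiable with finite measure, and that does not give your normal form. For instance, take $y^2+\rho(x')$ with $\rho\geq0$ smooth and vanishing exactly on a fat Cantor set $K\subset\mathbb R^n$. It has vanishing order $2$ at every zero, yet its zero set $K\times\{0\}$ has positive $\cH^n$-measure and is a hypersurface near none of its points. The regular-value step is harmless, since ordinary Sard suffices there.

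As written, your argument therefore proves the proposition only for real-analytic metrics. That is enough for the ellipsoids in Theorem~\ref{prop:counterexample-asymptotic-optimality}, but not for the statement as posed. Even in the analytic case, the passage from the local normal form to a global lower bound is only sketched.

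\textbf{How the paper avoids this.} It uses no structure theory, only the Crofton formula.
\begin{enumerate}
\item Almost every geodesic segment $\gamma$ of fixed length meets $Z=\{f=0\}$ finitely often, with endpoints off $Z$.
\item Isolate each zero of $f\circ\gamma$ in a small interval. By the intermediate value theorem, for small $s$ each zero yields at least two intersections with $f^{-1}(s)\cup f^{-1}(-s)$: one with each level if $f\circ\gamma$ changes sign, two with a single level otherwise.
\item Hence $N_Z\leq\frac12\liminf_j\bigl(N_{f^{-1}(s_j)}+N_{f^{-1}(-s_j)}\bigr)$ almost everywhere on the unit sphere bundle.
\item Integrating with Fatou's lemma gives $\cH^n(Z)\leq\liminf_j\frac12\bigl(\cH^n(f^{-1}(s_j))+\cH^n(f^{-1}(-s_j))\bigr)$.
\end{enumerate}
This uses only smoothness of $f$ and rectifiability with finite measure of $Z$. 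It performs your averaging over the two signs automatically, with no need to split $Z$ into sign-changing and even parts.
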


The preceding construction concerns the rigid choice of the first
$p+1$ eigenfunctions.  To formulate a less rigid notion of a nodal
sweepout, for $N\geq1$ let
\[
a_N\in H^1\bigl(\mathbb P(E_N(g));\mathbb Z_2\bigr)
\]
denote the generator.  For $p\geq1$ and $N\geq p$, define
\[
\mathcal N_p^N(g)
:=
\left\{
\Psi \mid
X^p\text{ is a finite complex},\
\Psi\in C\bigl(X,\mathbb P(E_N(g))\bigr),\
\Psi^*(a_N^p)\neq0
\right\}.
\]
We define the \textit{nodal $(p,N)$-width} by
\[
\nu_p^N(M,g):=
\inf_{\Psi\in\mathcal N_p^N(g)}
\sup_{x\in \operatorname{dmn}(\Psi)}
\cH_g^{n}\bigl(\{\Psi_x=0\}\bigr).
\]

Using this less rigid notion, we show that nodal sweepouts recover the volume spectrum. The main result of the paper is the following filtration formula.

\begin{theorem}
\label{thm:filtration}
For every $p\geq1$,
\[
\omega_p(M,g)
=
\lim_{N\to\infty}
\nu_p^N(M,g).
\]
\end{theorem}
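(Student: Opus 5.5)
Two inequalities are needed: $\liminf_{N\to\infty}\nu_p^N\ge\omega_p$ and $\limsup_{N\to\infty}\nu_p^N\le\omega_p$. We first observe that $\nu_p^N$ is nonincreasing in $N$. The inclusion $E_N\subset E_{N+1}$ induces $\mathbb P(E_N)\hookrightarrow\mathbb P(E_{N+1})$, which pulls $a_{N+1}$ back to $a_N$. Hence any $\Psi\in\mathcal N_p^N$ composed with this inclusion lies in $\mathcal N_p^{N+1}$ and has the same nodal sets. Therefore the limit exists, and it suffices to prove the two inequalities for the limit.

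\emph{Lower bound.} Given $\Psi\in\mathcal N_p^N(g)$, consider $\Phi:=F\circ\Psi$ with $F([f])=\partial\{f<0\}$, viewed as a $\mathbb Z_2$-current; this is independent of the sign of $f$. We must show two things. First, $F\colon\mathbb P(E_N)\to\mathcal B(M;\mathbb Z_2)$ is flat-continuous. This holds because nonzero elements of $E_N$ are real-analytic-type solutions of elliptic equations, so their zero sets have measure zero; hence $f\mapsto\{f<0\}$ is $L^1$-continuous, as in Beck--Becker-Kahn--Hanin. Second, $F^*\bar\lambda=a_N$. This can be checked on a line $\mathbb{RP}^1\subset\mathbb P(E_N)$ through $[1]$ and $[-1]$: the induced loop goes from $\partial\{1<0\}=0$ to $\partial\{-1<0\}=\partial M=0$, and the region $\{f<0\}$ passes from $\emptyset$ to $M$. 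Lifted to Caccioppoli sets, this path is not closed, so the loop is nontrivial under Almgren's isomorphism. This is exactly the BBKH argument with $E_p$ replaced by $E_N$. Then $\Phi^*(\bar\lambda^p)=\Psi^*(a_N^p)\neq0$, so $\Phi\in\mathcal P_p$. Moreover $\Mmass(\partial\{f<0\})\le\cH^n(\{f=0\})$, because the reduced boundary of $\{f<0\}$ lies in $\{f=0\}$. Taking the supremum and then the infimum gives $\omega_p\le\nu_p^N$ for every $N$.

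\emph{Upper bound.} This is the main obstacle. Fix $\delta>0$ and a $p$-sweepout $\Phi\colon X\to\mathcal B$ with $\sup\Mmass\le\omega_p+\delta$. The plan proceeds in three steps.

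First, we replace $\Phi$ by a sweepout with good continuity properties. By the Marques--Neves discretization and interpolation machinery, we may assume $\Phi$ has no concentration of mass and is continuous in the $\mathbf F$-metric, at the cost of $\delta$. Via the Almgren isomorphism, each $\Phi(x)=\partial\Omega_x$ is represented by Caccioppoli sets, well defined up to complement.

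Second, we approximate the sets by functions. Consider $u_x=\chi_{\Omega_x}-\chi_{M\setminus\Omega_x}$, which is defined up to sign. We mollify $u_x$ by the heat flow for time $t$, giving $e^{t\Delta}u_x$. This smoothing is well defined up to sign and is continuous from $X$ into $\mathbb P(C^\infty)$, provided it never vanishes; that holds for small $t$, since $\|e^{t\Delta}u_x\|_{L^2}$ is close to $\operatorname{Vol}(M)^{1/2}$. We then truncate to $E_N$ by spectral projection, obtaining $\Psi_x=[\pi_N e^{t\Delta}u_x]$. For $N$ large this is nonzero and continuous.

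Third, we check topology and area. For the topology, the same construction applied to the standard $\mathbb{RP}^1$ loop shows that $\Psi^*a_N$ pulls back $\bar\lambda$, so $\Psi\in\mathcal N_p^N$. For the area, we need $\cH^n(\{\Psi_x=0\})\le\Mmass(\partial\Omega_x)+o(1)$ uniformly in $x$. This is the delicate point. The heat-smoothed $u_x$ has level sets whose area, by the coarea formula, is controlled on average by $\int|\nabla e^{t\Delta}u_x|\le\Mmass(\partial\Omega_x)\cdot 2(1+o(1))$, which accounts for the jump of size $2$ from $-1$ to $1$. However, this gives control only of \emph{some} level $s$, not of the zero level.

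To handle this, we use the freedom in the less rigid definition: replace $\Psi_x$ by $\pi_N(e^{t\Delta}u_x)-s_x$, noting that constants lie in $E_N$. The issue is then choosing $s_x$ continuously in $x$ with good level-set area. We would try averaging, by parametrizing over $X\times[-1/2,1/2]$, or use a Sard/Crofton-type argument with a small generic perturbation inside $E_N$, together with uniform estimates. For uniformity, compactness of $X$ and no mass concentration should let us pass from pointwise estimates to uniform ones. The convergence $\pi_N e^{t\Delta}u\to e^{t\Delta}u$ in $C^1$ then controls the nodal area of the truncation, via stability of the area of regular level sets. If the level-selection issue cannot be resolved continuously, the fallback is to prove $\nu_p^N\le\omega_p+\delta$ by a discrete-to-continuous interpolation in $\mathbb P(E_N)$. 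This uses the fact that linear segments between nearby functions have nodal area controlled by the endpoints, as in the Proposition~\ref{prop:bbkh-equality}-type estimates.
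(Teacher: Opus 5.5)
\textbf{There is a genuine gap in your upper bound.} Your lower bound and the monotonicity of $\nu_p^N$ in $N$ are fine; the paper simply cites Theorem~2 of Beck--Becker-Kahn--Hanin for the lower bound, and the null-set property you need holds by finite vanishing order of finite eigenfunction sums.

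The gap sits exactly at the step you call delicate, and none of your remedies closes it. You never prove a bound $\cH^n(\{\Psi_x=0\})\le\Mmass(\partial\Omega_x)+o(1)$ that is uniform in $x$.
\begin{itemize}
\item Coarea controls only an average over levels.
\item A shift $s_x$ would have to be continuous and odd on the double cover ($s_{T\xi}=-s_\xi$). It would be drawn from an a.e.-defined set of good levels that can jump with $x$.
\item Parametrizing over $X\times[-1/2,1/2]$ makes things worse, because the width is a supremum over the parameter space, not an average.
\end{itemize}
Even if $H_tu_x$ had a good zero set, passing to $\pi_NH_tu_x$ by stability of regular level sets needs $0$ to be a regular value of $H_tu_x$ uniformly in $x$. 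Nothing in your construction provides that. At degenerate zeros, nodal measure is not upper semicontinuous under $C^\infty$ convergence. Already $y^2-\epsilon$ doubles the nodal area of $y^2$, and high-order zeros (the paper's $e^{-1/y^2}$ example) allow arbitrarily many new sheets. Your fallback claim, that nodal area along a linear segment between nearby functions is controlled by the endpoints, fails for the same reason; Proposition~\ref{prop:bbkh-equality} says nothing of the kind.

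There is a further obstruction. An $\mathbf F$-continuous sweepout without mass concentration has no perimeter-density bounds that stay uniform as $t\downarrow0$. So blow-ups of $u_x$ at scale $\sqrt t$ may converge only weakly, for example to $0$ under fine oscillation. Then $H_tu_x$ can be nearly zero on whole $\sqrt t$-balls, where no perturbation-stable nodal estimate is available.

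\textbf{The missing idea} is to confine possible degeneracy to a region of arbitrarily small area, so that a crude constant-factor bound suffices there. The paper does this in four steps.
\begin{itemize}
\item \emph{Discretization.} It uses Dey's almost-smooth discretization (Proposition~\ref{prop:dey-almost-smooth-discretization}). On each simplex, all vertex boundaries are smooth and coincide outside $D_\tau\Subset U_\tau$. Inside $U_\tau$, these boundaries and the level sets of a fixed Morse function $h$ have area $O(\delta)$.
\item \emph{Interpolation.} It interpolates by the sign of $\sum_i\lambda_ie^{c(v_i)h}\sigma_{v_i}$. The coloring makes this a nonzero polynomial in $e^h$ on each sign region. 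Hence new interfaces lie on boundedly many level sets of $h$, which gives uniform boundary-density bounds.
\item \emph{Outside $U_\tau$.} There $H_t\vartheta_\xi$ is $C^1$-close, at scale $\sqrt t$, to $\operatorname{erf}(r/2\sqrt t)$. So its zero set, and that of any $C^\infty$-small perturbation, is a nondegenerate $C^1$-small normal graph over the smooth boundary. This gives the sharp bound there.
\item \emph{Inside $U_\tau$.} The density bounds make the rescaled sign functions $L^1_{\mathrm{loc}}$-precompact with $\pm1$-valued limits. Their Gaussian convolutions are nonzero and real-analytic, so compactness yields a uniform quantitative finite vanishing order at scale $\sqrt t$ (Lemma~\ref{lem:stable-heat-rounding}). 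This gives a bound $Ct^{n/2}$ on nodal measure per $\sqrt t$-ball that survives $C^\infty$-small perturbations. About $O(\delta)t^{-n/2}+o(t^{-n/2})$ such balls cover the relevant set, so the total contribution is $O(\delta)+o_t(1)$.
\end{itemize}
Only with this perturbation-stable estimate does the spectral projection step go through.
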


\begin{remark}
\label{rem:general-filtration}
The Laplace filtration is not essential in Theorem~\ref{thm:filtration}.
More generally, let
$V_1\subset V_2\subset\ldots\subset C^\infty(M)$
be finite-dimensional linear subspaces such that
\[
\overline{\bigcup_{N\geq1}V_N}^{\,C^\infty}=C^\infty(M).
\]
Defining the corresponding nodal $(p,N)$-widths by replacing
$E_N(g)$ with $V_N$, one has
\[
\omega_p(M,g)
=
\lim_{N\to\infty}\nu_p(M,g,V_N).
\]

Indeed, Beck--Becker-Kahn--Hanin \cite{BBKH} used the finite-vanishing-order property
of Laplace--Beltrami eigenfunctions to verify the no-concentration-of-mass condition for
the nodal sweepout in \eqref{eq:MN-nodal-sweepout}.  This condition was
included as a technical hypothesis in the Marques--Neves formulation of
the $p$-widths; see Sections 3.7 and 4 in \cite{MN}.
Zhou subsequently showed that the no-concentration-of-mass
condition can be omitted from the definition of the volume spectrum
without changing the resulting $p$-widths;
see Section 5 in \cite{ZhouDiscretization}. Finally, the proof of the reverse inequality in
Theorem \ref{thm:filtration} uses only density in the $C^\infty$-topology.
\end{remark}

\medskip
\noindent\textbf{Outline.} In Section \ref{sec:discussion}, we state a series of questions on nodal geometry and min-max theory. In Section \ref{sec:counterexample}, we prove Theorem \ref{prop:counterexample-asymptotic-optimality} and Proposition \ref{prop:bbkh-equality}. In Section \ref{sec:filtration}, we prove Theorem \ref{thm:filtration}.

We first explain the main idea of the counterexample to Conjecture \ref{conj:asymptotic-optimality} in dimension two.
For $0<\varepsilon<1$, consider the ellipsoid of revolution
\[
M_\varepsilon
=
\left\{
(x_1,x_2,z)\in\mathbb R^3:
x_1^2+x_2^2+\frac{z^2}{\varepsilon^2}=1
\right\}.
\]
Let $A_\varepsilon=\operatorname{Area}(M_\varepsilon)$ and let
$L_\varepsilon$ denote the length of a meridian.  Separation of
variables gives a sequence of eigenfunctions
\[
u_m(t,\theta)=a_m(t)\cos(m\theta),
\]
where $a_m(t)>0,$ with eigenvalues $\mu_m$ satisfying
\[
\mu_m=m^2+o(m^2).
\]
Moreover, $\{u_m=0\}$ is the union of $m$ meridians, and hence
\[
\cH^1(\{u_m=0\})=mL_\varepsilon.
\]

Let $p_m$ be the spectral index of $\mu_m$.  The classical Weyl law \cite{Weyl}
gives
\[
p_m
=
\left(\frac{A_\varepsilon}{4\pi}+o(1)\right)m^2.
\]
On the other hand, the two-dimensional Weyl law for the volume spectrum of Liokumovich--Marques--Neves \cite{LMN},
whose universal constant was computed by Chodosh--Mantoulidis
\cite{ChodoshMantoulidisWidths}, gives
\[
\omega_p(M_\varepsilon)
\sim
\sqrt{\pi A_\varepsilon p},
\]
and therefore
\[
\omega_{p_m}(M_\varepsilon)
=
\left(\frac{A_\varepsilon}{2}+o(1)\right)m.
\]
Since $u_m\in E_{p_m}(g_\varepsilon)$,
we have
\[
\liminf_{m\to\infty}
\frac{\Phi_{p_m}(M_\varepsilon)}
{\omega_{p_m}(M_\varepsilon)}
\geq
\frac{2L_\varepsilon}{A_\varepsilon}.
\]
As $\varepsilon\downarrow0$, the ellipsoid converges to the unit
disk with multiplicity two, then
\[
A_\varepsilon\longrightarrow2\pi,
\qquad
L_\varepsilon\longrightarrow4.
\]
Consequently,
\[
\frac{2L_\varepsilon}{A_\varepsilon}
\longrightarrow
\frac4\pi>1,
\]
which disproves Conjecture \ref{conj:asymptotic-optimality} for all sufficiently small fixed
$\varepsilon>0$.  The argument in higher dimensions follows the same idea.

We outline the proof of Theorem \ref{thm:filtration}. Let us first explain the main difficulty.  Suppose that a $p$-sweepout
was already represented by a family continuous in the $C^\infty$-topology,
\[
x\longmapsto f_x\in C^\infty(M)\setminus\{0\}
\]
whose nodal sets have finite $n$-dimensional Hausdorff measure.
Then the associated sign boundaries
\[
x\longmapsto \partial\{f_x<0\}
\]
vary continuously in the flat topology.  Moreover, if $P_N$ denotes
the projection onto $E_N(g)$, compactness of the parameter space gives
\[
P_Nf_x\longrightarrow f_x
\]
in $C^\infty(M)$, uniformly in $x$.  Thus it would be natural simply to
replace the family by spectral approximations.

The obstruction is that the Hausdorff measure of the zero set is not
upper semicontinuous under smooth convergence.  This can be
seen on the following example.  On a coordinate cube, writing $y=x_{n+1}$, let
\[
f(y)=
\begin{cases}
e^{-1/y^2},&y\neq0,\\
0,&y=0,
\end{cases}
\qquad
f_k(y)=f(y)+e^{-k}\sin(ky).
\]
Then $f_k\to f$ in $C^\infty$. However, near $y=0$ the zero set of $f_k$
contains approximately $\sqrt k$ parallel hypersurfaces, while
$\{f=0\}=\{y=0\}$.  Consequently, smooth approximation alone gives no
control on nodal measure.

Note that the function $f$ in the example above has infinite vanishing order
at $y=0$.  If instead a fixed smooth function has finite vanishing order at every
point of its nodal set, compactness gives a uniform quantitative version, i.e.
there exist $q<\infty$ and $c>0$ such that, at every zero, some
derivative of order at most $q$ has norm at least $c$.  For a sufficiently
small smooth perturbation the same quantitative finite-vanishing-order property holds.
It was proved by B\"ar \cite{Bar} (see also Beck--Becker-Kahn--Hanin \cite{BBKH}) that this property then implies a uniform local estimate for
the $n$-dimensional Hausdorff measure of the perturbed zero set.
Thus the instability above is caused by the possibility of zeros with
arbitrarily large vanishing order. Our construction below is designed
to exclude this phenomenon uniformly over the whole family.

We start from an almost optimal flat-continuous $p$-sweepout and use
Dey's almost-smooth discretization \cite{Dey}.  On the vertices $v$ of
a sufficiently fine subdivision of the double cover of the cubical complex this
produces Caccioppoli sets $E_v$ and hypersurfaces $\Gamma_v:=E_v\cap E_{Tv},$
such that
\[
\cH^{n}(\Gamma_v)\leq \omega_p(M,g)+o(1).
\]
The main feature of the almost-smooth approximation construction is its locality. For every
simplex $\tau$ there are regions $D_\tau\Subset U_\tau$
such that all the sets $E_v$, $v\prec\tau$, agree outside $D_\tau$.
Moreover, the hypersurfaces are smooth outside $D_\tau$, and their
$\mathcal H^n$-measure in $U_\tau$ can be made arbitrarily small.

It remains to interpolate between the vertex sets.  Put
\[
\sigma_v:=1-2\chi_{E_v}.
\]
After a barycentric subdivision, color the vertices so that the colors
$c(v)\in\{0,\ldots,p\}$ are distinct on every simplex.  If
$\tau=[v_0,\ldots,v_j]$ and $\lambda_i$ are its barycentric
coordinates, define
\[
S_\xi(y)
=
\sum_{i=0}^j
\lambda_i(\xi)e^{c(v_i)h(y)}\sigma_{v_i}(y),
\]
where $h$ is a fixed Morse function coming from the almost-smooth approximation construction, and
set
\[
\vartheta_\xi:=\operatorname{sgn}(S_\xi).
\]

We now smooth the sign family by the heat flow,
\[
F_t(\xi):=H_t\vartheta_\xi.
\]
For every $t>0$ this is a continuous odd family of smooth nonzero
functions.

Outside $U_\tau$, all the sets attached to the vertices of $\tau$ agree, so
the problem reduces to smoothing the sign function of a single smooth
hypersurface. Thus, at scale $\sqrt t$ the heat flow is $C^1$-close to
\[
\operatorname{erf}\left(\frac{r}{2\sqrt t}\right),
\]
where $r$ is signed distance to the hypersurface.  Its zero set, and
the zero set of every sufficiently small smooth perturbation, is
therefore a small normal graph over the smooth hypersurface.

Inside $U_\tau$ there is no such graphical description, and this is
where the finite-vanishing-order idea discussed above enters.
On each region where the vertex signs are fixed,
$S_\xi$ is a polynomial
in $e^h$ of degree at most $p$.  Thus the new interfaces introduced by
the interpolation are contained in a uniformly bounded number of level
sets of the fixed Morse function $h$.  Together with the hypersurfaces attached to the vertices of $\tau$,
this gives a uniform local boundary-density bound for
$\{S_\xi<0\}$.

We now look at the heat flow at scale $\sqrt t$.  The
boundary-density bound makes the rescaled sign functions precompact in
$\operatorname{BV}_{\mathrm{loc}}$, and their heat evolutions converge to a compact
family of nonzero real-analytic Gaussian convolutions.  Compactness and
analyticity then give precisely the desired uniform quantitative
finite-vanishing-order property.  The same property holds for all sufficiently small
smooth perturbations.  As explained above, it prevents the creation of
arbitrarily many nearby nodal sheets and yields a uniform local bound
for their $n$-dimensional measure.  Since Dey's construction makes
the total hypersurface measure inside $U_\tau$ arbitrarily small, the
total nodal measure contribution of this region is also arbitrarily small.
Combining this with the graphical estimate outside $U_\tau$, we obtain,
uniformly in the parameter,
\[
\cH^{n}\bigl(\{\psi=0\}\bigr)
\leq
\omega_p(M,g)+o(1)
\]
whenever $\psi$ is sufficiently $C^\infty$-close to $F_t(\xi)$.

\medskip
\noindent\textbf{Acknowledgements.} The author is grateful to Yevgeny Liokumovich for his invaluable supervision and encouragement throughout this work. The author is thankful to Iosif Polterovich for helpful discussions and his interest in this work. This work was partially supported by Ontario Graduate Scholarship (OGS), University Doctoral Excellence Award (UDEA), and Dr. Sergiy and Tetyana Kryvoruchko Graduate Scholarship in Mathematics.

\medskip
\noindent\textbf{Statement of AI use.}
The proofs of Theorem~\ref{prop:counterexample-asymptotic-optimality} and Proposition~\ref{prop:bbkh-equality} were found with the assistance of ChatGPT 6 Astra Pro, and subsequently verified and rewritten by the author. The problem formulation, the main ideas, and the proof strategy in the proof of Theorem~\ref{thm:filtration} are the author's own, while ChatGPT 5.6 Sol Pro and ChatGPT 6 Astra Pro were used to assist in rigorously working out some technical details. ChatGPT 6 Astra Pro was also used to identify errors in the manuscript. The text of the paper is human-written. The author takes full responsibility for the mathematical content of the manuscript.

\section{Discussion and open questions}
\label{sec:discussion}

Although Theorem~\ref{prop:counterexample-asymptotic-optimality}
disproves Conjecture~\ref{conj:asymptotic-optimality}, a weaker
question raised by Neves \cite{NevesICM} remains open.

\begin{conjecture}[Neves' bounded ratio conjecture, \cite{NevesICM}]
\label{conj:bounded-nodal-ratio}
There exists a constant $C(M,g)>0$ such that
\[
\frac{\Phi_p(M,g)}{\omega_p(M,g)}
\leq
C(M,g)
\]
for every $p\geq1$.
\end{conjecture}

For real-analytic metrics this follows from the work of
Jerison--Lebeau \cite{JerisonLebeau}, the classical
Weyl law \cite{Weyl} and the Weyl law for the volume spectrum of Liokumovich--Marques--Neves \cite{LMN}.
Note that Conjecture~\ref{conj:bounded-nodal-ratio}
would imply the upper bound
in Yau's conjecture for Laplace eigenfunctions \cite{Yau}. The lower bound in Yau's conjecture for Laplace eigenfunctions was famously
proved by Logunov \cite{LogunovLower}.

The two Weyl laws suggest the following quantitative version of the filtration formula.
\begin{conjecture}
For every $\varepsilon>0$,
there exists $C_\varepsilon=C_\varepsilon(M,g)$ such that
\[
\nu_p^{\lceil C_\varepsilon p\rceil}(M,g)
\leq
(1+\varepsilon)\omega_p(M,g)
\]
for every $p\geq1$.
\end{conjecture}

In another direction, one may ask whether the nonlinear
topological families in Theorem~\ref{thm:filtration} can be replaced
by projectivizations of linear subspaces. For $N\geq p$,
define
\[
\ell_p^N(M,g)
:=
\inf_{\substack{V\subset E_N(g)\\ \dim V=p+1}}
\ \sup_{0\neq f\in V}
\cH_g^n(\{f=0\}).
\]
The projectivization of a $(p+1)$-dimensional subspace gives an
admissible family in $\mathcal N_p^N(g)$, and therefore
\[
\nu_p^N(M,g)\leq\ell_p^N(M,g).
\]
Moreover, Proposition~\ref{prop:bbkh-equality} gives
$\ell_p^p(M,g)=\Phi_p(M,g)$.
\begin{conjecture}
For every $p\geq1$,
\[
\omega_p(M,g)
=
\lim_{N\to\infty}\ell_p^N(M,g).
\]
\end{conjecture}

It is also natural to ask for analogous filtration formulas
in other geometric min-max theories.  For example, the
phase-transition spectrum of Gaspar--Guaraco
\cite{GasparGuaraco,GasparGuaracoWeyl}, the half-volume spectrum
of Mazurowski--Zhou
\cite{MazurowskiZhou}, the Simon--Smith min-max theory \cite{Smith} (see also Colding--De Lellis \cite{ColdingDeLellis}), the min-max theory for constant mean
curvature hypersurfaces of Zhou--Zhu \cite{ZhouZhu}, and the free boundary min-max
theory of Li--Zhou \cite{LiZhou}.

It is natural to ask whether the relation between Laplace eigenfunctions
and minimal hypersurfaces can hold at the level of individual
hypersurfaces, rather than only at the level of sweepouts.

\begin{conjecture}
\label{conj:nodal-realization}
Let $(M,g)$ be a closed Riemannian manifold.  Then there exist
$N\geq 0$, a nonzero function $f\in E_N(g),$
and a smooth closed embedded minimal hypersurface $\Sigma\subset M$
such that
\[
\Sigma=\{f=0\}.
\]
\end{conjecture}

In analogy with Yau's abundance conjecture for minimal hypersurfaces \cite{Yau},
one may ask for a much stronger statement.

\begin{conjecture}
\label{conj:nodal-abundance}
Let $(M,g)$ be a closed Riemannian manifold.  Then there exist
infinitely many pairwise distinct smooth closed embedded minimal
hypersurfaces $\Sigma_j\subset M$ such that, for every $j$, there are
$N_j\geq0$ and a nonzero function
$f_j\in E_{N_j}(g)$
satisfying
\[
\Sigma_j=\{f_j=0\}.
\]
\end{conjecture}

Finally, Theorem~\ref{thm:filtration} expresses each element of the
volume spectrum as a limit of finite-dimensional nodal min-max
problems, and may provide a new approach to explicit computations.
There are very few cases in which the complete volume spectrum is
known.  Chodosh--Mantoulidis \cite{ChodoshMantoulidisWidths} computed
the $p$-widths of the round two-sphere, and Marx-Kuo computed $p$-widths
of the standard real projective plane \cite{MarxKuoRP2}, Zoll metrics on two-spheres \cite{MarxKuoZoll} and round hemispheres \cite{MarxKuoHemisphere}.
In higher
dimensions, round spheres and flat tori seem to be natural first
targets, since their Laplace eigenspaces are explicitly understood.
Determining the universal constant
$a_n$ in the Weyl law for the volume spectrum remains open for
$n\geq2$; the case $n=1$ was computed by
Chodosh--Mantoulidis \cite{ChodoshMantoulidisWidths}.

\section{A counterexample to the asymptotic optimality conjecture}
\label{sec:counterexample}
\begin{lemma}
Suppose that
$f\in C^\infty(M)$ and that $Z=f^{-1}(0)$ is countably
$n$-rectifiable with finite measure.  If $s_j\downarrow0$ and both
$s_j$ and $-s_j$ are regular values of $f$, then
\begin{equation}
\label{eq:two-sided-level-measure}
\cH_g^{n}(Z)
\leq
\liminf_{j\to\infty}
\frac{
\cH_g^{n}(f^{-1}(s_j))
+
\cH_g^{n}(f^{-1}(-s_j))
}{2}.
\end{equation}
\end{lemma}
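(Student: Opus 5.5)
We will prove the inequality by a coarea/lower-semicontinuity argument applied to the truncated functions $\min\{\max\{f,-s\},s\}$, or equivalently to the sets $\Omega_s^\pm$. The plan rests on the elementary observation that $Z$ separates nothing by itself. Instead, we compare $\cH^n(Z)$ with the perimeter of the \emph{slab} $U_s:=\{|f|<s\}$.

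Let $s=s_j$. Since $\pm s$ are regular values, $U_s$ is an open set with smooth boundary
\[
\partial U_s=f^{-1}(s)\cup f^{-1}(-s),
\]
so its perimeter equals $\cH^n(f^{-1}(s))+\cH^n(f^{-1}(-s))$. As $s\downarrow0$, $U_s$ decreases to $Z$. We therefore cannot apply lower semicontinuity of perimeter to $U_s$ directly: the limit is the null set $Z$ if $Z$ has no interior, and if $Z$ has positive volume its perimeter is not $2\cH^n(Z)$. The factor $2$ must instead come from two-sidedness.

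The key steps are as follows.

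\emph{Localization.} By rectifiability and finite measure, for $\cH^n$-a.e.\ $y\in Z$ the set $Z$ has an approximate tangent plane $T_y$ with density one. Given $\delta>0$, a Vitali/Besicovitch covering lets us choose finitely many disjoint small balls $B_i=B_{r_i}(y_i)$ with the following properties:
\[
\cH^n(Z\cap B_i)\ge(1-\delta)\mu_n r_i^n,\qquad \sum_i\cH^n(Z\cap B_i)\ge\cH^n(Z)-\delta,
\]
and $Z\cap B_i$ lies in a $\delta r_i$-neighborhood of $y_i+T_{y_i}$, at least in measure.

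\emph{Two-sided counting in each ball.} Let $\pi_i$ be the orthogonal projection onto $T_{y_i}$. For a.e.\ point $x$ in the projected disk $D_i$ for which the normal line $\ell_x$ through $x$ meets $Z\cap B_i$ near the center, we argue as follows. On $\ell_x$, $f$ vanishes somewhere in the middle. We claim $|f|\ge s$ at the two ends of the segment $\ell_x\cap B_i$ once $s$ is small. This holds for most $x$, because $Z$ stays in the thin slab, so the segment endpoints avoid $Z$, and $|f|$ is bounded below by a positive constant there by compactness (after shrinking the ends slightly).

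Hence, by continuity, along $\ell_x$ the function $f$ passes from $|f|\ge s$ to $0$ and back to $|f|\ge s$. So $\ell_x$ meets $f^{-1}(s)\cup f^{-1}(-s)$ at least twice. The area formula (Eilers/projection inequality $\cH^n(S)\ge\int_{D}\#(S\cap\ell_x)\,dx$) then gives
\[
\cH^n\bigl((f^{-1}(s)\cup f^{-1}(-s))\cap B_i\bigr)\ge 2\,\cH^n(\text{good part of }D_i)\ge2(1-C\delta)\mu_nr_i^n .
\]

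\emph{Summing and letting $\delta\to0$.} Summing over the disjoint balls, then taking $\liminf_j$ and finally $\delta\to0$, yields \eqref{eq:two-sided-level-measure}. The metric $g$ enters only through the factor $1+O(r_i)$ from comparing with Euclidean coordinates, which disappears as the radii shrink.

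The main obstacle is the middle step: ensuring that for most transversal lines the endpoints lie outside $\{|f|<s\}$ uniformly as $s\to0$. The difficulty is that $Z$ might have small pieces wandering near the ends of the segments. We handle this by choosing the segments' endpoints on a compact set $K_i\subset B_i\setminus Z$, namely the two caps at height $\pm\delta r_i$ minus a small neighborhood of $Z$, which has large projected measure. On $K_i$ we have $\min_{K_i}|f|>0$, and for every $s$ below this minimum the crossing argument applies.
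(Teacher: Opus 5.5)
Your route is genuinely different from the paper's. The paper does not localize at all. It uses the Crofton formula on the unit tangent bundle, $\cH^n_g(S)=\frac{1}{c_nT}\int_{SM}N_S\,dL$, for both the rectifiable set $Z$ and the smooth level sets $f^{-1}(\pm s_j)$. For Liouville-a.e.\ geodesic segment, $Z$ is met finitely often and not at the endpoints. Isolating each zero in a small interval and applying the intermediate value theorem gives two crossings of $f^{-1}(s_j)\cup f^{-1}(-s_j)$ \emph{per zero} once $j$ is large (depending on the segment). Hence $N_Z\le\frac12\liminf_j(N_{f^{-1}(s_j)}+N_{f^{-1}(-s_j)})$ a.e., and Fatou's lemma finishes. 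Averaging over all directions makes the count exact, so no tangent planes, densities or covering theorems are needed, and Fatou replaces your uniform threshold in $s$ (the compact caps $K_i$). Your argument uses one projection direction per ball and gets two crossings \emph{per line}. It must therefore recover $\cH^n(Z\cap B_i)$ from density and from the projected area of $Z$. In exchange it needs only Euclidean projections in normal coordinates rather than integral geometry on $(M,g)$.

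As written, two steps need repair. First, the summation needs the \emph{upper} bound $\cH^n(Z\cap B_i)\le(1+\delta)\mu_nr_i^n$, not the lower bound you state; density one gives both, so this is cosmetic.

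Second, and more substantively, the estimate $\cH^n(\text{good part of }D_i)\ge(1-C\delta)\mu_nr_i^n$ requires that most normal lines over $D_i$ actually meet $Z$ inside the slab, i.e.\ that $\pi_i(Z\cap B_i)$ covers most of $D_i$. This does not follow from an approximate tangent plane with density one at $y_i$. Take a closed set which, in each dyadic annulus around $y_i$, is a steep sawtooth with area factor $2$ built over a checkerboard occupying half of the annulus, with mesh and amplitude $o(2^{-k})$. It has density one and weak tangent plane $T_{y_i}$ at $y_i$, yet it projects onto only about half of every disk. The endpoint issue you single out as the main obstacle is actually the easy part.

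The fix is to choose the centers $y_i$ by a stronger a.e.\ property from the structure theorem. $Z$ is covered, up to an $\cH^n$-null set, by countably many $C^1$ hypersurfaces $\Sigma_k$. For $\cH^n$-a.e.\ $y\in Z\cap\Sigma_k$ one has $\Theta^n(Z\setminus\Sigma_k,y)=0$, and $y$ is a Lebesgue density point of $Z\cap\Sigma_k$ inside $\Sigma_k$. At such centers, $\Sigma_k\cap B_i$ is a small-slope graph over $T_{y_i}$ lying in the slab. Hence $\pi_i(Z\cap\Sigma_k\cap B_i)$ has measure at least $(1-o(1))\mu_nr_i^n$, and the rest of your argument goes through. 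Alternatively, count zeros with multiplicity along each normal line, exactly as the paper does along geodesics, and use the area formula with Fatou. This trades the covering claim for approximate continuity of the tangent-plane map of $Z$ at $y_i$.
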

\begin{proof}
Let $L_1$ be the length of the shortest closed geodesic of $(M,g)$ and
fix $0<T<L_1$. Let
\[
SM:=\{(x,v)\in TM:|v|_g=1\}
\]
be the unit tangent bundle of $M$, and let
\[
dL(x,v):=dV_g(x)\,d\sigma_x(v)
\]
denote the Liouville measure on $SM$, where $d\sigma_x$ is the standard
spherical measure on the unit sphere $S_xM\subset T_xM$. For a countably $n$-rectifiable set $S\subset M$,
set
\[
N_S(x,v)
:=
\#\bigl\{
t\in[-T,T]:
\gamma_{x,v}(t)\in S
\bigr\},
\qquad
\gamma_{x,v}(t)=\exp_x(tv).
\]
By the Crofton formula (see e.g. Proposition 9 in Zelditch \cite{ZelditchCrofton}) if $S$ is
a smooth hypersurface, then
\begin{equation}
\label{eq:crofton}
\cH_g^{n}(S)
=
\frac{1}{c_n T}\int_{SM}N_S(x,v)\,dL(x,v),
\end{equation}
where $c_n>0$ depends only on the dimension.  By the area formula, the same identity holds for countably
$n$-rectifiable sets of finite measure.

Applying \eqref{eq:crofton} to $Z$, we see that $N_Z(x,v)<\infty$ for
almost every $(x,v)\in SM$.  After removing another null set, we may
also assume that the endpoints of $\gamma_{x,v}$ do not belong to $Z$.
Fix such a geodesic segment.  Around each of its finitely many
intersections with $Z$, choose a small closed interval containing no
other intersection with $Z$, with all these intervals pairwise
disjoint.  For all sufficiently large $j$, $s_j$ is smaller than the
absolute value of $f\circ\gamma_{x,v}$ at every endpoint of these
intervals.

Consider one such interval.  If the endpoint values of
$f\circ\gamma_{x,v}$ have opposite signs, the intermediate value
theorem gives one intersection with $f^{-1}(s_j)$ and one with
$f^{-1}(-s_j)$.  If both endpoint values are positive, it gives two
intersections with $f^{-1}(s_j)$, one on each side of the zero. If
both are negative, it similarly gives two intersections with
$f^{-1}(-s_j)$.  Summing over the intervals yields
\[
N_Z(x,v)
\leq
\frac{1}{2}
\liminf_{j\to\infty}
\left(
N_{f^{-1}(s_j)}(x,v)
+
N_{f^{-1}(-s_j)}(x,v)
\right)
\]
for almost every $(x,v)\in SM$.

Integrating this inequality and using Fatou's lemma, we obtain
\begin{align*}
\cH_g^{n}(Z)
&=
\frac{1}{c_nT}\int_{SM}N_Z\,dL\\
&\leq
\frac{1}{2c_nT}\int_{SM}
\liminf_{j\to\infty}
\left(
N_{f^{-1}(s_j)}
+
N_{f^{-1}(-s_j)}
\right)dL\\
&\leq
\frac{1}{2c_nT}\liminf_{j\to\infty}
\int_{SM}
\left(
N_{f^{-1}(s_j)}
+
N_{f^{-1}(-s_j)}
\right)dL\\
&=
\frac{1}{2}
\liminf_{j\to\infty}
\left(
\cH_g^{n}(f^{-1}(s_j))
+
\cH_g^{n}(f^{-1}(-s_j))
\right).
\end{align*}
Since $\pm s_j$ are regular values and hence the
two level sets are smooth hypersurfaces, the last equality follows from
\eqref{eq:crofton}. This completes the proof of the lemma.
\end{proof}

\begin{proof}[Proof of Proposition \ref{prop:bbkh-equality}]
Fix $0\neq f\in E_p(g)$.  Nonzero finite linear combinations of Laplace eigenfunctions have
finite vanishing order by Lin \cite{Lin91}. Lemma 3 in B\"ar  \cite{Bar}
implies that their zero sets are countably
$n$-rectifiable and have locally finite $n$-dimensional
Hausdorff measure. By Sard's theorem, choose
$s_j\downarrow0$ such that both $\pm s_j$ are regular values.
Then, since $f\pm s_j\in E_p(g)$, we have
\[
\cH_g^{n}(f^{-1}(\pm s_j))
=
\Mmass_g\bigl(\partial\{f<\pm s_j\}\bigr)
\leq \Phi_p(M,g).
\]
By \eqref{eq:two-sided-level-measure},
\[
\cH_g^{n}(\{f=0\})\leq\Phi_p(M,g).
\]
Taking the supremum over $f$ gives
\[
\sup_{0\neq f\in E_p(g)}
\cH_g^{n}(\{f=0\})
\leq \Phi_p(M,g).
\]
The reverse inequality follows from $\partial\{f<0\}\subset\{f=0\}$.
This proves the first equality in Proposition \ref{prop:bbkh-equality}.

Finally, identify $\mathbb P(E_p(g))$ with $\mathbb{RP}^p$.
If $\Psi:X\longrightarrow\mathbb{RP}^p$ is such that
$\Psi^*(a_p^p)\neq0,$ then $\Psi$ is surjective.
Indeed, if $\Psi$ doesn't contain a point $q\in\mathbb{RP}^p$ in the image, then it factors through $\mathbb{RP}^p\setminus\{q\}$, which deformation retracts onto $\mathbb{RP}^{p-1}$. Hence $a_p^p$ restricts to zero on $\mathbb{RP}^p\setminus\{q\}$, contradicting $\Psi^*(a_p^p)\neq0$. Thus every family in $\mathcal N_p^p(g)$ has
maximal nodal measure
\[
\sup_{0\neq f\in E_p(g)}
\cH_g^{n}(\{f=0\}).
\]
Conversely, the identity map of $\mathbb P(E_p(g))$ belongs to $\mathcal N_p^p(g)$.  This proves the
second equality.
\end{proof}

\begin{proof}[Proof of Theorem \ref{prop:counterexample-asymptotic-optimality}]
Denote $\sigma_j:=\mathcal H^j(S^j).$ Combining the Weyl law for the Laplace spectrum \cite{Weyl} with the Liokumovich--Marques--Neves' Weyl law for the volume spectrum \cite{LMN}, there is
$\beta_n>0$ such that, on every closed $(n+1)$-manifold,
\begin{equation}
\label{eq:beta-d}
\frac{\omega_p(M,g)}
{\operatorname{Vol}(M,g)\sqrt{\lambda_p(M,g)}}
\longrightarrow \beta_n.
\end{equation}

Considering the sweepouts given by linear combinations of the first $p+1$ Laplace eigenfunctions on $(M^{n+1},g) = (S^{n+1},g_{\operatorname{round}})$, and using Theorem 2 in Beck--Becker-Kahn--Hanin \cite{BBKH} and Theorem 3 in Gichev \cite{Gichev}, we obtain
\begin{equation}
\label{eq:beta-upper}
\beta_n\leq \frac{\sigma_{n}}{\sigma_{n+1}}.
\end{equation}

For $0<\varepsilon<1$, consider the ellipsoid
\[
M_\varepsilon
=
\left\{
(x,z)\in\mathbb R^{n+1}\times\mathbb R:
|x|^2+\frac{z^2}{\varepsilon^2}=1
\right\}
\]
with the induced metric, and the following parametrization
\[
(t,\xi)\longmapsto
(\sin t\,\xi,\varepsilon\cos t),
\qquad
0\leq t\leq\pi,\quad \xi\in S^{n}.
\]
Then
\[
g_\varepsilon
=
q_\varepsilon(t)^2dt^2
+\sin^2t\,g_{S^{n}},
\]
where
\[
q_\varepsilon(t)
=
\sqrt{\cos^2t+\varepsilon^2\sin^2t}.
\]
Put
\[
I_j(\varepsilon)
:=
\int_0^\pi
q_\varepsilon(t)\sin^j t\,dt.
\]
Then
\begin{equation}
\label{eq:ellipsoid-volume}
\operatorname{Vol}(M_\varepsilon)
=
\sigma_{n}I_{n}(\varepsilon).
\end{equation}

For $m\geq1$, let
\[
Y_m(\xi)
=
\operatorname{Re}(\xi_1+i\xi_2)^m.
\]
Thus
\[
-\Delta_{S^{n}}Y_m
=
\kappa_mY_m,
\]
where
\[
\kappa_m=m(m+n-1).
\]
The subspace of functions of the form $u(t,\xi)=a(t)Y_m(\xi)$
is invariant under $\Delta_{g_\varepsilon}$.  Let $\mu_m$ be the
lowest eigenvalue of $-\Delta_{g_\varepsilon}$ in this subspace and
denote a corresponding eigenfunction as
\[
u_m(t,\xi)=a_m(t)Y_m(\xi).
\]
Computing the Rayleigh quotient, we have
\[
\mu_m=\inf_a Q_m(a),
\]
where
\[
Q_m(a)
=
\frac{
\displaystyle
\int_0^\pi
\left(
\frac{|a'|^2}{q_\varepsilon^2}
+
\frac{\kappa_m}{\sin^2t}a^2
\right)
q_\varepsilon\sin^{n}t\,dt
}{
\displaystyle
\int_0^\pi
a^2q_\varepsilon\sin^{n}t\,dt
}.
\]
Since $\sin t\leq1$, we have $\mu_m\geq\kappa_m.$
Conversely, given $\eta>0$, choose a fixed nonzero smooth function
$a$ supported sufficiently close to $t=\pi/2$ so that
$\sin^{-2}t\leq1+\eta$ on its support.  Then
\[
\mu_m
\leq
(1+\eta)\kappa_m+C_{\varepsilon,\eta}.
\]
Hence
\begin{equation}
\label{eq:mu-m-asymptotic}
\frac{\mu_m}{m^2}\longrightarrow1.
\end{equation}

Since $Q_m(|a_m|)=Q_m(a_m),$ then $a_m$ can be chosen nonnegative.
By the uniqueness of the solutions of the Euler--Lagrange equation, we have
$a_m(t)>0$ for $0<t<\pi$.
Consequently, $\{u_m=0\}$ is the union of the $m$ meridional
hypersurfaces corresponding to the $m$ great $(n-1)$-spheres in
$\{Y_m=0\}\subset S^n$. Hence
\begin{equation}
\label{eq:um-nodal-mass}
\cH_{g_\varepsilon}^{n}(\{u_m=0\})
=
m\sigma_{n-1}I_{n-1}(\varepsilon).
\end{equation}

Let
\[
p_m
=
\#\{j:\lambda_j(M_\varepsilon)\leq\mu_m\}-1,
\]
where eigenvalues are counted with multiplicity.  Then $\lambda_{p_m}(M_\varepsilon)=\mu_m$
and $E_{p_m}(g_\varepsilon)$ contains $u_m$. By Proposition \ref{prop:bbkh-equality} and \eqref{eq:um-nodal-mass},
\[
\Phi_{p_m}(M_\varepsilon)
\geq
m\sigma_{n-1}I_{n-1}(\varepsilon).
\]
Since $p_m\to\infty$, \eqref{eq:beta-d},
\eqref{eq:ellipsoid-volume}, and \eqref{eq:mu-m-asymptotic} imply
\[
\omega_{p_m}(M_\varepsilon)
=
\bigl(\beta_n+o(1)\bigr)
\sigma_{n}I_{n}(\varepsilon)m.
\]
It follows that
\begin{equation}
\label{eq:ratio-ellipsoid}
\liminf_{m\to\infty}
\frac{\Phi_{p_m}(M_\varepsilon)}
{\omega_{p_m}(M_\varepsilon)}
\geq
\frac{
\sigma_{n-1}I_{n-1}(\varepsilon)
}{
\beta_n\sigma_{n}I_{n}(\varepsilon)
}
\geq
\frac{\sigma_{n+1}\sigma_{n-1}}{\sigma_{n}^2}
\frac{I_{n-1}(\varepsilon)}
{I_{n}(\varepsilon)},
\end{equation}
where the last inequality follows from \eqref{eq:beta-upper}.

As $\varepsilon\downarrow0$,
\[
q_\varepsilon(t)\longrightarrow |\cos t|,
\]
and hence
\[
I_j(\varepsilon)
\longrightarrow
\int_0^\pi
|\cos t|\sin^jt\,dt
=
\frac{2}{j+1}.
\]
Thus the right-hand side of \eqref{eq:ratio-ellipsoid} converges to
\begin{equation}
\label{eq:dimensional-gap}
C_n
:=
\frac{n+1}{n}
\frac{\sigma_{n+1}\sigma_{n-1}}{\sigma_{n}^2}.
\end{equation}
We claim that $C_n>1$. Since
\[
\sigma_j
=
\frac{2\pi^{(j+1)/2}}
{\Gamma((j+1)/2)},
\]
we have
\[
\frac{\sigma_{n+1}\sigma_{n-1}}{\sigma_{n}^2}
=
\frac{
\Gamma((n+1)/2)^2
}{
\Gamma((n+2)/2)\Gamma(n/2)
}.
\]
Hence
\[
C_n
=
\frac{n+1}{n}
\frac{
\Gamma((n+1)/2)^2
}{
\Gamma((n+2)/2)\Gamma(n/2)
}
=
\frac{2(n+1)}{n^2}
\frac{
\Gamma((n+1)/2)^2
}{
\Gamma(n/2)^2
},
\]
where in the last equality we used
\[
\Gamma\left(\frac{n+2}{2}\right)
=
\frac{n}{2}
\Gamma\left(\frac{n}{2}\right).
\]

By the strict log-convexity of the Gamma function,
\[
\Gamma\left(\frac{n+2}{2}\right)^2
<
\Gamma\left(\frac{n+1}{2}\right)
\Gamma\left(\frac{n+3}{2}\right).
\]
Using
\[
\Gamma\left(\frac{n+3}{2}\right)
=
\frac{n+1}{2}\Gamma\left(\frac{n+1}{2}\right)
\]
and again
\[
\Gamma\left(\frac{n+2}{2}\right)
=
\frac{n}{2}
\Gamma\left(\frac{n}{2}\right),
\]
we obtain
\[
\frac{n^2}{4}
\Gamma\left(\frac{n}{2}\right)^2
<
\frac{n+1}{2}
\Gamma\left(\frac{n+1}{2}\right)^2.
\]
Equivalently,
\[
C_n
=
\frac{2(n+1)}{n^2}
\frac{
\Gamma((n+1)/2)^2
}{
\Gamma(n/2)^2
}
>1.
\]

Thus, by \eqref{eq:ratio-ellipsoid}, taking $\varepsilon>0$ sufficiently small gives
\[
\limsup_{p\to\infty}
\frac{\Phi_p(M_\varepsilon)}
{\omega_p(M_\varepsilon)}
\geq
\liminf_{m\to\infty}
\frac{\Phi_{p_m}(M_\varepsilon)}
{\omega_{p_m}(M_\varepsilon)}
>1.
\]
This completes the proof.
\end{proof}

\section{Filtration formula}
\label{sec:filtration}

The following proposition follows from Dey's almost-smooth discrete sweepout construction (see Section 3 in \cite{Dey}).

\begin{proposition}[Dey's almost-smooth discretization, \cite{Dey}]
\label{prop:dey-almost-smooth-discretization}
Let $X$ be a cubical complex with $\dim X=p$, and let
$\Phi:X\longrightarrow\cZ_{n}(M;\Ztwo)$
be a $p$-sweepout.  Set
\[
L:=\sup_{x\in X}\Mmass_g(\Phi(x)),
\]
and suppose that
$L<\infty.$ Let $\pi:\widetilde X\longrightarrow X$
be the corresponding double cover with deck involution $T$. 

For every $\delta>0$, there is a $T$-equivariant
barycentric triangulation of $\widetilde X$ and a map
\[
v\longmapsto E_v\in\cC(M)
\]
defined on its vertices, where each $E_v$ is chosen as a closed representative. Moreover, Moreover, there exist a smooth Morse function $h:M\longrightarrow[0,1]$
and an open set $W_0\subset M$ such that the following properties hold.

\begin{itemize}
\item[(i)]
For every vertex $v$,
\[
E_v\cup E_{Tv}=M.
\]
Set
\[
\Gamma_v:=E_v\cap E_{Tv}.
\]
Each $\Gamma_v$ is a finite union of smooth hypersurface pieces.  Every
nonempty intersection of $r$ distinct
pieces is transverse and has codimension $r$.  All intersections of at least
two pieces are contained in $W_0$.  In particular, every $\Gamma_v$ is a
smooth hypersurface on $M\setminus W_0$.

\item[(ii)]
For every vertex $v$,
\begin{equation}
\label{eq:almost-smooth-global-area}
\cH^{n}(\Gamma_v)
\leq
L+C_p\delta.
\end{equation}

\item[(iii)]
For every simplex $\tau$ there are open sets
\[
W_0\subset D_\tau\Subset U_\tau
\]
such that $U_\tau$ has smooth boundary, $dh\neq0$ on
$\partial U_\tau$, $h|_{\partial U_\tau}$ is Morse, and
$\partial U_\tau$ is transverse to every smooth hypersurface piece
occurring in $\Gamma_v$, $v\prec\tau$. Moreover, for every pair of vertices $v,w\prec\tau$,
\begin{equation}
\label{eq:almost-smooth-simplex-locality}
E_v\cap(M\setminus D_\tau)
=
E_w\cap(M\setminus D_\tau),
\end{equation}
and
\begin{equation}
\label{eq:almost-smooth-local-area}
\sum_{v\prec\tau}
\cH^{n}(\Gamma_v\cap U_\tau)
+
\sup_{s\in[0,1]}
\cH^{n}\bigl(h^{-1}(s)\cap U_\tau\bigr)
\leq
C_p\delta.
\end{equation}

\item[(iv)]
The vertices admit a $T$-invariant coloring
\begin{equation}
\label{eq:barycentric-coloring}
c:\widetilde X_0\longrightarrow\{0,1,\ldots,p\}
\end{equation}
which is injective on the vertex set of every simplex.
\end{itemize}
\end{proposition}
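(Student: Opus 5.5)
The plan is to extract the statement from Section 3 of \cite{Dey}, repackaging his construction in the form needed here rather than building anything new. First, apply Zhou's discretization (Section 5 in \cite{ZhouDiscretization}) together with Almgren's isomorphism to replace $\Phi$ by a fine discrete map on a subdivision of $X$. The mass of this map is at most $L+\delta$, and its fineness is small in the $\mathbf F$-metric. Lift it to the double cover $\widetilde X$, so that each vertex $v$ carries a Caccioppoli set $E_v$ whose boundary is the value of $\Phi$ at $\pi(v)$. The deck involution $T$ should exchange $E_v$ with its complement.

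Next I will run Dey's almost-smooth replacement. Using Pitts' almost-minimizing-type local replacements and smoothing, each $\partial E_v$ is replaced by a union of smooth hypersurface pieces. The pieces are put in general position by a generic perturbation, with all their mutual intersections confined to a small open set $W_0$. Replacing $E_v$ by closed representatives slightly thickened along the pieces gives $E_v\cup E_{Tv}=M$. It also makes $\Gamma_v=E_v\cap E_{Tv}$ equal to the common boundary, which yields (i). Dey's mass estimate for the replacement then gives (ii), with constants depending only on $p$ through the number of steps.

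Locality (iii) comes from Dey's key feature: the sets attached to adjacent vertices differ only in small balls. Fineness in mass puts these balls in a union $D_\tau$ of small total measure, and we thicken it to $U_\tau$. We choose $\partial U_\tau$ as a generic regular level set of a distance-like function, which makes it transverse to the finitely many pieces and to the level structure of $h$. To get the Morse function $h$, take any Morse function and compose or perturb it generically. The bound on $\mathcal H^n(h^{-1}(s)\cap U_\tau)$ follows once $U_\tau$ is contained in a union of sufficiently small balls, since the level sets of a fixed smooth function have bounded density in small balls. Finally, (iv) is the standard fact that the barycentric subdivision of a simplicial complex is colored by the dimension of the face a vertex is the barycenter of. This coloring is $T$-invariant because $T$ acts simplicially.

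The main obstacle is (iii). The difficulty is that $h$ must be chosen \emph{before} the regions $U_\tau$, yet its level sets must have small measure inside all of them uniformly in $s$. I would handle this by choosing $h$ first. I would then use coarea and density bounds to make the total volume, and hence the covering radius, of the regions $U_\tau$ small enough relative to $\delta$, since $\delta$ is arbitrary.
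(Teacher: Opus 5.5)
There is a genuine gap in (iv), which is the only part the paper actually proves (for (i)--(iii) the paper, like you, just cites Section 3 of Dey). The statement needs \emph{one} triangulation with \emph{one} vertex map $v\mapsto E_v$ satisfying (i)--(iv) together. Your sketch places Dey's sets on the vertices of a cubical subdivision. The face-dimension coloring, however, lives on a barycentric subdivision, and its new vertices $b_\sigma$ carry no sets. Noting that $T$ acts simplicially makes the coloring $T$-invariant, but it does not say how to define $E_{b_\sigma}$ so that (i)--(iii) survive. Caccioppoli sets cannot be interpolated, so an existing set has to be reused.

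The paper's step is as follows. Choose $a(\sigma)\prec\sigma$ with $a(T\sigma)=Ta(\sigma)$; this is possible because a free simplicial involution never maps a simplex to itself, since it would then fix the barycenter. Set $E_{b_\sigma}:=E_{a(\sigma)}$. Then $Tb_\sigma=b_{T\sigma}$ gives $\Gamma_{b_\sigma}=\Gamma_{a(\sigma)}$, so (i) and (ii) persist. A simplex of the subdivision has the form $[b_{\sigma_0},\ldots,b_{\sigma_j}]$ with $\sigma_0\prec\cdots\prec\sigma_j$. Hence every $a(\sigma_i)$ is a vertex of the single old simplex $\sigma_j$, and (iii) holds with $D_\tau:=D_{\sigma_j}$ and $U_\tau:=U_{\sigma_j}$. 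Repetitions among the $a(\sigma_i)$ cost at most a factor $p+1$, which is absorbed into $C_p$. The same inheritance argument is also what you need to pass from Dey's cells to simplices at all, and your sketch skips that step too.

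Two steps in your reconstruction of (i)--(iii) would also fail as written, although the paper simply imports these properties from Dey. First, genuinely thickening the closed representatives gives $E_v\cap E_{Tv}$ nonempty interior, which breaks (i) and (ii). You want $E_v$ and $E_{Tv}$ to be the closures of the two complementary open regions bounded by the pieces.

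Second, and more seriously, small volume of $U_\tau$ does not control $\sup_s\mathcal H^n(h^{-1}(s)\cap U_\tau)$. A thin slab around a level of $h$ has tiny volume yet contains an entire level set. Being ``a union of sufficiently small balls'' is not enough either, because the number of balls matters. What is needed is a cover of $U_\tau$ by balls $B_{r_i}$, below the density scale of the levels of $h$, with $\sum_i r_i^n\lesssim\delta$. That kind of $n$-dimensional smallness comes from mass-fineness: the perimeter of $E_v\triangle E_w$ is small, and relative isoperimetry plus a Vitali covering turn this into such a cover. Coarea and volume bounds do not give it.
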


\begin{proof}
Properties (i)--(iii) follow directly from Dey's almost-smooth discrete sweepout construction (see Section 3 in \cite{Dey}).

It remains to prove (iv). Take the barycentric subdivision of
the simplicial complex.  A new vertex is the barycenter
$b_\sigma$ of a simplex $\sigma$ of the preceding subdivision.  Choose a
vertex
\[
a(\sigma)\prec\sigma
\]
for every $\sigma$, equivariantly under the deck involution:
\[
a(T\sigma)=T a(\sigma).
\]
Set
\[
E_{b_\sigma}:=E_{a(\sigma)}.
\]

A simplex of the barycentric subdivision has the form
\[
\tau=
[b_{\sigma_0},\ldots,b_{\sigma_j}],
\]
where
\[
\sigma_0\prec\sigma_1\prec\ldots\prec\sigma_j.
\]
For every $i$, the chosen vertex $a(\sigma_i)$ is a vertex of the largest
simplex $\sigma_j$.  Hence all the sets attached to the vertices of
$\tau$ come from vertices of the single preceding simplex $\sigma_j$.
Therefore, (i)--(iii) still hold on the barycentric subdivision.

Finally define
\[
c(b_\sigma):=\dim\sigma.
\]
If
\[
[b_{\sigma_0},\ldots,b_{\sigma_j}]
\]
is a barycentric simplex, then
\[
\dim\sigma_0
<
\dim\sigma_1
<
\ldots
<
\dim\sigma_j.
\]
Thus its vertices have distinct colors.  Since $\dim X\leq p$, all colors
belong to $\{0,\ldots,p\}$, and since $T$ preserves dimensions, the
coloring is $T$-invariant.
\end{proof}

\begin{proof}[Proof of Theorem \ref{thm:filtration}]
By Theorem 2 in Beck--Becker-Kahn--Hanin \cite{BBKH},
\[
\omega_p(M,g)\leq \nu_p^N(M,g)
\]
for every $N\geq p$, so it remains to prove the reverse inequality in the
limit. Fix $\eta>0$.  Choose a finite cubical complex $X$, with
$\dim X\leq p$, and a $p$-sweepout
$\Phi:X\longrightarrow \mathcal B(M;\mathbb Z_2).$
such that
\[
L:=\sup_{x\in X}\Mmass_g(\Phi(x))
\leq
\omega_p(M,g)+\eta.
\]
Let $\pi:\widetilde X\longrightarrow X$
be the corresponding double cover, with deck involution $T$, and let
$\alpha:=\Phi^*(\bar\lambda)\in H^1(X;\Ztwo).$
Then $\alpha^p\neq0.$

The case $n=0$ is trivial.  Assume $n\geq1$.  We divide the
construction into four steps.

\medskip
\noindent
\textbf{Step 1: Dey's almost-smooth discretization.}
Fix $\delta>0$, to be chosen at the end. Applying
Proposition~\ref{prop:dey-almost-smooth-discretization} to $\Phi$, we obtain a
map
\[
v\longmapsto E_v
\]
on the vertices of $\widetilde X$.  Write
\[
\Gamma_v:=E_v\cap E_{Tv}.
\]
We also obtain the function
\[
h:M\longrightarrow[0,1],
\]
the neighborhood $W_0$, the regions
\[
W_0\subset D_\tau\Subset U_\tau
\]
associated with each simplex $\tau$, and the $T$-invariant coloring
\[
c:\widetilde X_0\longrightarrow\{0,\ldots,p\},
\]
such that properties (i)--(iv) hold.

\medskip
\noindent
\textbf{Step 2: interpolation and heat smoothing.}
For a vertex $v$, define
\[
\sigma_v:=1-2\chi_{E_v}\in L^\infty(M;\{-1,1\}).
\]
Then $\sigma_{Tv}=-\sigma_v$ a.e. on $M$.
Let $\tau=[v_0,\ldots,v_j]$ be a simplex and let
$\lambda_0,\ldots,\lambda_j$ be its barycentric coordinates. For $\xi\in\tau$, define
\begin{equation}
\label{eq:piecewise-selector}
S_\xi(y)
:=
\sum_{i=0}^j
\lambda_i(\xi)e^{c(v_i)h(y)}\sigma_{v_i}(y).
\end{equation}
Formula \eqref{eq:piecewise-selector} agrees on common faces and therefore
defines a continuous odd map
\[
S:\widetilde X\longrightarrow L^2(M).
\]
Set
\[
a_\xi(y)
:=
\sum_{i=0}^j
\lambda_i(\xi)e^{c(v_i)h(y)}.
\]
Then
\begin{equation}
\label{eq:weight-bounds}
1\leq a_\xi\leq e^p.
\end{equation}
By \eqref{eq:almost-smooth-simplex-locality}, for every vertex
$v\prec\tau$,
\begin{equation}
\label{eq:selector-locality}
S_\xi=a_\xi\sigma_v
\end{equation}
a.e. on $M\setminus D_\tau$.

For $\xi\in\widetilde X$, set
\[
A_\xi:=\operatorname{int}\{S_\xi<0\}
\]
and
\[
\vartheta_\xi:=1-2\chi_{A_\xi}.
\]
Thus $\vartheta_\xi\in L^\infty(M;\{-1,1\})$.  We first note that
\begin{equation}
\label{eq:selector-zero-volume}
\cH^{n+1}(\{S_\xi=0\})=0
\end{equation}
for every $\xi\in\widetilde X$.  Indeed, suppose that
$\xi\in\tau=[v_0,\ldots,v_j]$ and set
\[
\mathscr G_\tau:=\bigcup_{i=0}^j\Gamma_{v_i}.
\]
Let $C$ be a connected component of $M\setminus\mathscr G_\tau$. For every $i$, the function $\sigma_{v_i}$ is constant on $C$. Denote
\[
\sigma_{v_i}|_C=\epsilon_i,
\]
where $\epsilon_i\in\{-1,1\}$.
Then on $C$,
\[
S_\xi=P(e^h),
\]
where
\[
P(Y)
:=
\sum_{i=0}^j
\lambda_i(\xi)\epsilon_iY^{c(v_i)}.
\]
The colors $c(v_i)$ are distinct, and at least one barycentric coordinate
$\lambda_i(\xi)$ is nonzero.  Thus $P$ is a nonzero polynomial of degree
at most $p$. There are only finitely many sign vectors
$\epsilon\in\{-1,1\}^{j+1}$. Hence, outside $\mathscr G_\tau$,
$\{S_\xi=0\}$ is contained in a finite union of level sets of $h$.  This proves
\eqref{eq:selector-zero-volume}.

The map
\[
\vartheta:\widetilde X\longrightarrow L^2(M),
\qquad
\vartheta(\xi):=\vartheta_\xi,
\]
is continuous. Indeed, suppose that $\xi_k\to\xi$.  For every
$a>0$,
\[
\{\vartheta_{\xi_k}\neq\vartheta_\xi\}
\subset
\{|S_\xi|\leq a\}
\cup
\{|S_{\xi_k}-S_\xi|\geq a\}.
\]
By Chebyshev's inequality,
\[
\|\vartheta_{\xi_k}-\vartheta_\xi\|_{L^2}^2
\leq
4\cH^{n+1}(\{|S_\xi|\leq a\})
+
4a^{-2}\|S_{\xi_k}-S_\xi\|_{L^2}^2.
\]
First let $k\to\infty$, and then let $a\downarrow0$.  Since
$S:\widetilde X\to L^2(M)$ is continuous,
\eqref{eq:selector-zero-volume} implies
$\vartheta_{\xi_k}\longrightarrow\vartheta_\xi$
in $L^2(M)$. Moreover, since $S_{T\xi}=-S_\xi$, we have
$\vartheta_{T\xi}=-\vartheta_\xi$
almost everywhere.

Let $H_t$ denote the heat flow on $(M,g)$ (see
Minakshisundaram--Pleijel \cite{MP} and Rosenberg \cite{Rosenberg}).
Thus, for $f\in L^2(M)$, the function
\[
u(t,\cdot):=H_tf
\]
is the solution of
\[
\partial_tu-\Delta_gu=0,
\qquad
u(0,\cdot)=f.
\]
For every $t>0$, the heat flow defines an injective continuous linear map
\[
H_t:L^2(M)\longrightarrow C^\infty(M).
\]
Define
\begin{equation}
\label{eq:smoothed-selector}
F_t(\xi):=H_t\vartheta_\xi.
\end{equation}
Hence, for every $t>0$,
\[
F_t:\widetilde X\longrightarrow C^\infty(M)\setminus\{0\}
\]
is a continuous odd map.

\medskip
\noindent
\textbf{Step 3: the Hausdorff measure estimate.}
We claim that, for every $\kappa>0$, there exists $t>0$ so that
the following holds.  If $\xi_k\to\xi$ in $\widetilde X$ and
\[
\|\psi_k-F_t(\xi_k)\|_{C^m(M)}
\longrightarrow0
\]
for every $m$, then
\begin{equation}
\label{eq:stable-heat-nodal-bound}
\limsup_{k\to\infty}
\cH^{n}(\{\psi_k=0\})
\leq
L+C_p\delta+\kappa.
\end{equation}

After passing to a subsequence, we may suppose that $\xi_k$ and $\xi$
belong to one simplex $\tau=[v_0,\ldots,v_j].$
Fix a vertex $v\prec\tau$.

We first record some standard results about the heat flow.

\begin{itemize}

\item If $A,B\subset M$ are separated by
\[
\operatorname{dist}_g(A,B)\geq\rho>0
\]
and $f\in L^\infty(M)$ is supported in $B$, then, for every $m\geq0$ and $0<t\leq1$,
\begin{equation}
\label{eq:heat-off-diagonal-general}
\sup_A
|\nabla^mH_tf|
\leq
C_m
t^{-(n+m+2)}
\exp\left(-\frac{\rho^2}{4t}\right).
\|f\|_{L^\infty}.
\end{equation}
This follows from Theorem 3.5 in Ludewig \cite{Ludewig}.

\item For $z\in\mathbb R^{n+1},$ let
\[
G(z):=(4\pi)^{-(n+1)/2}e^{-|z|^2/4}.
\]
Suppose that $t_k\downarrow0$, $x_k\in M$, and
\[
\iota_k:\mathbb R^{n+1}\longrightarrow T_{x_k}M
\]
are orthonormal isometries.  Set
\[
\Phi_k(z):=\exp_{x_k}(\sqrt{t_k}\,\iota_k z)
\]
whenever the right-hand side is defined.  Let $f_k\in L^\infty(M)$
satisfy
\[
\sup_k\|f_k\|_{L^\infty}<\infty,
\]
and suppose that
\[
f_k\circ\Phi_k\longrightarrow f_\infty
\]
in $L^1_{\mathrm{loc}}(\mathbb R^{n+1})$. Then
\begin{equation}
\label{eq:rescaled-heat-convergence}
H_{t_k}f_k\circ\Phi_k
\longrightarrow
G*f_\infty
\end{equation}
in $C^\infty_{\mathrm{loc}}(\mathbb R^{n+1}).$ This follows from Theorems 1.1 and 3.5 in Ludewig \cite{Ludewig}.

\item For $t>0$,
\begin{equation}
\label{eq:heat-semigroup-bounds}
\|H_tf\|_{L^\infty}
\leq
\|f\|_{L^\infty},
\qquad
\|\nabla H_tf\|_{L^\infty}
\leq
Ct^{-1/2}\|f\|_{L^\infty}.
\end{equation}
This follows from Theorem 7.13 in Grigor'yan \cite{Grig}.

\end{itemize}

\medskip
\noindent
\textbf{Step 3 (a): the Hausdorff measure estimate on $M \setminus U_\tau$.}
By \eqref{eq:selector-locality} and \eqref{eq:weight-bounds},
\[
A_{\xi_k}\cap(M\setminus D_\tau)
=
E_v\cap(M\setminus D_\tau)
\]
modulo null sets.  Consequently,
$\vartheta_{\xi_k}=\sigma_v$
almost everywhere on $M\setminus D_\tau$.  Hence
\[
F_t(\xi_k)-H_t\sigma_v
=
H_t(\vartheta_{\xi_k}-\sigma_v),
\]
where $\vartheta_{\xi_k}-\sigma_v$ is supported in
$\overline{D_\tau}$ and has $L^\infty$-norm at most $2$.  Set
\[
d_\tau
:=
\operatorname{dist}_g
\bigl(
\overline{D_\tau},M\setminus U_\tau
\bigr)>0.
\]
By \eqref{eq:heat-off-diagonal-general}, for every $m\geq0$,
\begin{equation}
\label{eq:heat-off-diagonal}
\sup_k
\|F_t(\xi_k)-H_t\sigma_v\|_{C^m(M\setminus U_\tau)}
\leq
C_m t^{-(n+m+2)}
\exp\left(-\frac{d_\tau^2}{4t}\right).
\end{equation}

Since $D_\tau\Subset U_\tau$ and $\Gamma_v$ is smooth on $M\setminus D_\tau$, the compact sets $\Gamma_v\cap(M\setminus U_\tau)$ and $\overline{D_\tau}$ are at a positive distance from each other.  By the tubular
neighborhood theorem, there exists $r_\tau>0$ such that the exponential map
\[
(q,r)\longmapsto \exp_q(r\nu_q),
\qquad
|r|<r_\tau,
\]
is one-to-one for $q \in \Gamma_v\cap(M\setminus U_\tau)$.
Choosing the normal so that $\sigma_v=1$ on the side $r>0$, the signed
distance $r$ is smooth in the tube $\mathcal T'$ and satisfies
\[
\sigma_v=\operatorname{sgn}(r),
\qquad
|\nabla r|_g=1.
\]
Choose a smaller tubular neighborhood $\mathcal T\Subset\mathcal T'.$

\begin{lemma}
There exist $C>0$ and $t_0>0$ such that, for every $0<t<t_0$,
\begin{equation}
\label{eq:heat-sign-model-comparison}
\sup_{\mathcal T}
\left|
H_t\sigma_v-\operatorname{erf}
\left(
\frac{r}{2\sqrt t}
\right)
\right|
\leq C\sqrt t
\end{equation}
and
\begin{equation}
\label{eq:heat-sign-gradient-comparison}
\sup_{\mathcal T}
\left|
\nabla\bigl(H_t\sigma_v-\operatorname{erf}
\left(
\frac{r}{2\sqrt t}
\right)\bigr)
\right|
\leq C.
\end{equation}
\end{lemma}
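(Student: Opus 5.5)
We compare $H_t\sigma_v$ with the model $w_t:=\operatorname{erf}(r/(2\sqrt t))$, which is smooth and well defined in the tube $\mathcal T'$. The plan is to localize with a cutoff, write down the heat equation satisfied by the model, and control the error by Duhamel's formula.

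\textbf{Step 1: localization and the model's heat equation.}
Fix a cutoff $\chi\in C_c^\infty(\mathcal T')$ with $\chi\equiv1$ on a neighborhood of $\overline{\mathcal T}$. Set $u:=\chi w_t$, defined on $M$. Since $|\nabla r|=1$, a direct computation gives
\[
(\partial_t-\Delta_g)w_t=-w_t''(r)\,|\nabla r|^2+\partial_tw_t-w_t'(r)\Delta_g r=-\frac{e^{-r^2/4t}}{\sqrt{\pi t}}\,\Delta_g r,
\]
because the one-dimensional error function profile solves the one-dimensional heat equation. The only error term comes from the mean curvature $\Delta_g r$ of the level sets of $r$, which is smooth and bounded on $\mathcal T'$. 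Hence
\[
\Bigl|(\partial_t-\Delta)u\Bigr|\le C t^{-1/2}e^{-r^2/4t}+E_t,
\]
where $E_t$ contains the terms with derivatives of $\chi$. These are supported where $|r|\ge c>0$, and there $|w_t\mp1|$ and all its derivatives are $O(e^{-c^2/8t})$.

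\textbf{Step 2: the initial discrepancy.}
As $t\to0$, $u(t)\to\chi\sigma_v$ in $L^1$. Write $\sigma_v=\chi\sigma_v+(1-\chi)\sigma_v$. The piece $(1-\chi)\sigma_v$ is supported at positive distance from $\overline{\mathcal T}$. By \eqref{eq:heat-off-diagonal-general}, its heat evolution contributes $O(t^\infty)$ in $C^1(\mathcal T)$.

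\textbf{Step 3: Duhamel estimate.}
The difference $e:=H_t(\chi\sigma_v)-u(t)$ has zero initial data and source $f(s)=-(\partial_s-\Delta)u(s)$. Therefore
\[
e(t)=\int_0^tH_{t-s}f(s)\,ds .
\]
For the $C^0$ bound we use $\|H_{t-s}\|_{L^\infty\to L^\infty}\le1$ from \eqref{eq:heat-semigroup-bounds}. This gives $\int_0^t Cs^{-1/2}\,ds=C\sqrt t$, which is \eqref{eq:heat-sign-model-comparison}; the $E_t$ contribution is exponentially small.

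\textbf{Step 4: the gradient bound (main obstacle).}
The naive estimate $\|\nabla H_{t-s}\|_{L^\infty\to L^\infty}\le C(t-s)^{-1/2}$ gives
\[
\int_0^t (t-s)^{-1/2}s^{-1/2}\,ds=\pi ,
\]
a bounded constant. This yields \eqref{eq:heat-sign-gradient-comparison} directly, since only boundedness by $C$ is claimed. The $E_t$ terms are again exponentially small, and Step 2 supplies the far-field piece.

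The only delicate point is justifying Duhamel for an initial datum that is merely $L^\infty$ (discontinuous across $\Gamma_v$). We handle this by starting at time $s_0>0$ and letting $s_0\downarrow0$. The convergence $u(s_0)-H_{s_0}(\chi\sigma_v)\to0$ in $L^1$ is enough, because for fixed $t$ the operator $H_{t-s_0}$ maps $L^1$ to $C^1$ continuously, uniformly for $s_0\le t/2$.
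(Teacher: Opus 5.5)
Your proposal is correct and follows the paper's proof essentially step for step. It uses the same cutoff $\chi$, the same source $-(\pi t)^{-1/2}e^{-r^2/(4t)}\Delta_g r=O(t^{-1/2})$ coming from the one-dimensional heat equation for $\operatorname{erf}$, the same Duhamel bounds $\int_0^t s^{-1/2}\,ds\lesssim\sqrt t$ and $\int_0^t(t-s)^{-1/2}s^{-1/2}\,ds=\pi$, and the off-diagonal heat kernel bound for $H_t((1-\chi)\sigma_v)$; your limiting argument from $s_0>0$ replaces the paper's appeal to Ball's variation-of-constants formula.

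There is one inaccuracy, which does not affect the conclusion: the cutoff source is not pointwise exponentially small. The term $-w_t\Delta_g\chi\approx\mp\Delta_g\chi$ is of order one. Also, since the tube lies over $\Gamma_v\cap(M\setminus U_\tau)$, which has boundary, $\nabla\chi$ need not be supported in $\{|r|\ge c\}$. The contribution of this source on $\mathcal T$ is still harmless, for either of two reasons. Its support is at positive distance from $\overline{\mathcal T}$, so the off-diagonal bound applies; this is the paper's argument. Alternatively, any source bounded by $C(1+s^{-1/2})$ is absorbed by the same Duhamel computation.
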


\begin{proof}
Let
\[
\chi\in C_c^\infty(\mathcal T'),
\qquad
0\leq\chi\leq1,
\]
such that $\chi\equiv1$ on $\overline{\mathcal T}$.  In particular, both
$\operatorname{spt}\nabla\chi$ and $\operatorname{spt}(1-\chi)$ are a fixed positive distance from $\mathcal T$.

Set
\[
\varphi(t,\rho)
:=
\operatorname{erf}
\left(
\frac{\rho}{2\sqrt t}
\right)
\]
and
\[
\alpha(t,x)
:=
\chi(x)\varphi(t,r(x)),
\]
extended by zero outside $\mathcal T'$.  Since
\[
\varphi(t,\rho)\longrightarrow\operatorname{sgn}(\rho)
\]
for $\rho\neq0$, we have
\[
\alpha(t,\cdot)\longrightarrow\chi\sigma_v
\]
in $L^2(M)$ as $t\downarrow0$.

We decompose
\[
H_t\sigma_v-\varphi(t,r)
=
\bigl(H_t(\chi\sigma_v)-\alpha(t)\bigr)
+
H_t((1-\chi)\sigma_v)
\]
on $\mathcal T$.

The function $\varphi$ solves the one-dimensional heat equation
\[
\partial_t\varphi-\partial_\rho^2\varphi=0,
\]
and
\[
\partial_\rho\varphi(t,\rho)
=
\frac{1}{\sqrt{\pi t}}
\exp\left(
-\frac{\rho^2}{4t}
\right).
\]
Using $|\nabla r|_g=1$, we have
\[
E(t,x)
:=
(\partial_t-\Delta_g)\alpha(t,x)
=
-\chi\,\partial_\rho\varphi(t,r)\Delta_g r
-2\partial_\rho\varphi(t,r)
\langle\nabla\chi,\nabla r\rangle_g
-\varphi(t,r)\Delta_g\chi.
\]
Write
\[
E=E_0+E_\chi,
\]
where
\[
E_0
=
-\chi\,\partial_\rho\varphi(t,r)\Delta_g r
\]
and
\[
E_\chi
=
-2\partial_\rho\varphi(t,r)
\langle\nabla\chi,\nabla r\rangle_g
-\varphi(t,r)\Delta_g\chi.
\]

Since $r$ is smooth on the fixed tube $\mathcal T'$,
\[
|\Delta_g r|\leq C,
\]
and hence
\begin{equation}
\label{eq:main-error}
\|E_0(t,\cdot)\|_{L^\infty}
\leq
Ct^{-1/2}.
\end{equation}
Since $\chi$ is fixed,
\[
|\nabla\chi|+|\Delta_g\chi|\leq C,
\]
and therefore
\begin{equation}
\label{eq:cutoff-error}
\|E_\chi(t,\cdot)\|_{L^\infty}
\leq
C(1+t^{-1/2}).
\end{equation}

Consider
\[
w(t)
:=
H_t(\chi\sigma_v)-\alpha(t).
\]
Then
\[
(\partial_t-\Delta_g)w=-E,
\qquad
w(0)=0.
\]
By Ball's variation-of-constants formula \cite{Ball},
\begin{equation}
\label{eq:duhamel}
w(t)
=
-\int_0^tH_{t-s}E(s)\,ds.
\end{equation}

By \eqref{eq:heat-semigroup-bounds} and \eqref{eq:main-error},
\[
\left\|
\int_0^tH_{t-s}E_0(s)\,ds
\right\|_{L^\infty}
\leq
C\int_0^t s^{-1/2}\,ds
\leq
C\sqrt t,
\]
and
\[
\left\|
\nabla\int_0^tH_{t-s}E_0(s)\,ds
\right\|_{L^\infty}
\leq
C\int_0^t
(t-s)^{-1/2}s^{-1/2}\,ds
=
C\pi.
\]

Since
\[
\operatorname{spt}E_\chi
\subset
\operatorname{spt}\nabla\chi
\cup
\operatorname{spt}\Delta_g\chi,
\]
set
\[
\delta_\chi
:=
\operatorname{dist}_g
\left(
\overline{\mathcal T},
\operatorname{spt}\nabla\chi
\cup
\operatorname{spt}\Delta_g\chi
\right)
>0.
\]
Then, by \eqref{eq:heat-off-diagonal-general} and
\eqref{eq:cutoff-error}, for $m=0,1$,
\[
\begin{aligned}
\sup_{\mathcal T}
\left|
\nabla^m
\int_0^tH_{t-s}E_\chi(s)\,ds
\right|
&\leq
C
\int_0^t
(t-s)^{-(n+m+2)}
\exp\left(
-\frac{\delta_\chi^2}{4(t-s)}
\right)
(1+s^{-1/2})\,ds.
\end{aligned}
\]
Splitting the integral at $s=t/2$, one obtains
\[
\int_0^t
(t-s)^{-(n+m+2)}
\exp\left(
-\frac{\delta_\chi^2}{4(t-s)}
\right)
(1+s^{-1/2})\,ds
\leq
C
t^{-n-m-3/2}
\exp\left(
-\frac{\delta_\chi^2}{4t}
\right).
\]
Consequently, for $m = 0,1$,
\[
\sup_{\mathcal T}
\left|
\nabla^m
\int_0^tH_{t-s}E_\chi(s)\,ds
\right|
\leq
C
t^{-n-m-3/2}
\exp\left(
-\frac{\delta_\chi^2}{4t}
\right).
\]

Set
\[
\delta_0
:=
\operatorname{dist}_g
\left(
\overline{\mathcal T},
\operatorname{spt}(1-\chi)
\right)
>0.
\]
Since
\[
\left\|
(1-\chi)\sigma_v
\right\|_{L^\infty}
\leq1,
\]
the estimate \eqref{eq:heat-off-diagonal-general} gives, for $m=0,1$,
\[
\sup_{\mathcal T}
\left|
\nabla^mH_t((1-\chi)\sigma_v)
\right|
\leq
C_m
t^{-(n+m+2)}
\exp\left(
-\frac{\delta_0^2}{4t}
\right).
\]
Since $\chi=1$ on $\mathcal T$, we have
\[
\alpha(t,x)
=
\varphi(t,r(x))
\]
for $x\in\mathcal T$. Combining the preceding estimates yields
\[
\sup_{\mathcal T}
\left|
H_t\sigma_v
-
\operatorname{erf}\left(\frac{r}{2\sqrt t}\right)
\right|
\leq
C\sqrt t
\]
and
\[
\sup_{\mathcal T}
\left|
\nabla
\left(
H_t\sigma_v
-
\operatorname{erf}\left(\frac{r}{2\sqrt t}\right)
\right)
\right|
\leq
C.
\]
This completes the proof of the lemma.
\end{proof}

We now describe the zero set.  Choose $A>0$ sufficiently large.  Since
\[
\operatorname{erf}
\left(
\frac{At}{2\sqrt t}
\right)
=
\frac{A}{\sqrt\pi}\sqrt t
+
O(t^{3/2}),
\]
\eqref{eq:heat-sign-model-comparison} implies, after first fixing $A$
large enough and then taking $t$ sufficiently small, that
\[
H_t\sigma_v\bigl(\exp_q(-At\,\nu_q)\bigr)<-c\sqrt t,
\qquad
H_t\sigma_v\bigl(\exp_q(At\,\nu_q)\bigr)>c\sqrt t
\]
uniformly along the relevant portion of $\Gamma_v$.  Thus every normal
fiber contains a zero between $r=-At$ and $r=At$.

On the same region,
\[
\partial_r
\operatorname{erf}
\left(
\frac{r}{2\sqrt t}
\right)
=
\frac{1}{\sqrt{\pi t}}
\exp\left(
-\frac{r^2}{4t}
\right).
\]
For $|r|\leq At$ the right-hand side is bounded below by
$c/\sqrt t$.  Hence
\eqref{eq:heat-sign-gradient-comparison} gives, after decreasing $t$,
\begin{equation}
\label{eq:heat-normal-transversality}
\partial_rH_t\sigma_v
\geq
\frac{c}{\sqrt t}.
\end{equation}
Therefore $H_t\sigma_v$ is strictly increasing on every normal fiber,
and the zero on each fiber is unique.

Moreover, if $|r(x)|\geq At$ and $x\in\mathcal T$, then
monotonicity of the error function and
\eqref{eq:heat-sign-model-comparison} imply that $H_t\sigma_v$ does not vanish at such $x$. Now let $x\in(M\setminus U_\tau)\setminus\mathcal T$. Then
\[
\operatorname{dist}_g\bigl(x,\Gamma_v\bigr)\geq\delta>0.
\]
Set
\[
f_x(y):=\sigma_v(y)-\sigma_v(x).
\]
Since heat flow preserves constants,
\[
H_t\sigma_v(x)-\sigma_v(x)=H_tf_x(x).
\]
Note that
\[
\|f_x\|_{L^\infty}\leq2.
\]
Moreover, $f_x$ is supported on the opposite side of $\Gamma_v$, so
\[
\operatorname{dist}_g\bigl(x,\operatorname{spt}f_x\bigr)\geq\delta.
\]
Hence \eqref{eq:heat-off-diagonal-general} implies
\[
|H_t\sigma_v(x)-\sigma_v(x)|
=
|H_tf_x(x)|
\leq
Ct^{-(n+2)}
\exp\left(
-\frac{\delta^2}{4t}
\right).
\]
Thus, for sufficiently small $t$, $H_t\sigma_v$ has the same sign as
$\sigma_v$ there. Therefore, $H_t\sigma_v$ has no zeros outside $\{x\in\mathcal T: |r(x)|\leq At\}$.

By \eqref{eq:heat-off-diagonal}, the conclusions about $H_t\sigma_v$ hold for $F_t(\xi_k)$ as well for $t$ sufficiently small. Consequently, uniformly in $k$,
every relevant normal fiber contains exactly one nondegenerate zero of
$F_t(\xi_k)$, and there are no other zeros in $M\setminus U_\tau$.
Then
\[
\{F_t(\xi_k)=0\}\cap(M\setminus U_\tau)
\]
is a smooth normal graph over a subset of
\[
\Gamma_v\cap(M\setminus U_\tau).
\]

If $r_{t,k}$ denotes its graph function, then as was shown above
\[
\|r_{t,k}\|_{L^\infty}\leq At.
\]
In the tubular coordinates, differentiating
\[
\widehat F_{t,k}
\bigl(
\exp_q(r_{t,k}(q)\nu_q)
\bigr)
=
0
\]
in a direction tangent to $\Gamma_v$, and using
\eqref{eq:heat-sign-gradient-comparison},
\eqref{eq:heat-off-diagonal}, and the preceding lower bound for the
normal derivative, gives
\[
\|\nabla r_{t,k}\|_{L^\infty}
\leq
C\sqrt t.
\]
Hence
\begin{equation}
\label{eq:F-graph-small}
\sup_k
\|r_{t,k}\|_{C^1}
=
o_t(1).
\end{equation}

Finally, for this fixed $t$,
\[
\psi_k-F_t(\xi_k)\longrightarrow0
\]
in $C^\infty(M)$.  Thus, the above conclusions remain valid for $\psi_k$.  Consequently,
\[
\{\psi_k=0\}\cap(M\setminus U_\tau)
\]
is a normal graph over a subset of
$\Gamma_v\setminus D_\tau$, and its graph function satisfies
\[
\limsup_{k\to\infty}\|\tilde r_{t,k}\|_{C^1}
=
o_t(1).
\]
By the area formula,
\begin{equation}
\label{eq:outside-contribution}
\limsup_{k\to\infty}
\cH^{n}
\bigl(
\{\psi_k=0\}\cap(M\setminus U_\tau)
\bigr)
\leq
\cH^{n}
\bigl(
\Gamma_v\cap(M\setminus U_\tau)
\bigr)
+
o_t(1).
\end{equation}

\medskip
\noindent
\textbf{Step 3 (b): the Hausdorff measure estimate on $U_\tau$.}
We first prove the following local heat-flow estimate.

\begin{lemma}
\label{lem:stable-heat-rounding}
Let $\mathscr A$ be a family of open finite-perimeter subsets of
$M$.  Suppose that there are $C_0,r_0>0$ such that
\begin{equation}
\label{eq:uniform-boundary-density}
\cH^{n}(\partial A\cap B_r(x))
\leq C_0r^{n}
\end{equation}
for every $A\in\mathscr A$, $x\in M$, and $0<r<r_0$.  Put
\[
\vartheta_A:=1-2\chi_A.
\]
There is $R_0>0$ with the following property.  For every fixed
$R>0$, there are an integer $m$, constants
$C,\varepsilon_0>0$, and $t_0>0$ such that, if
$A\in\mathscr A$, $0<t<t_0$, and $\psi\in C^\infty(M)$ satisfies
\begin{equation}
\label{eq:scaled-heat-perturbation}
\sum_{\ell=0}^m
t^{\ell/2}
\|\nabla^\ell(\psi-H_t\vartheta_A)\|_{L^\infty(M)}
<\varepsilon_0,
\end{equation}
then
\begin{equation}
\label{eq:local-heat-nodal-area}
\cH^{n}
\bigl(
\{\psi=0\}\cap B_{R\sqrt t}(x)
\bigr)
\leq Ct^{n/2}
\end{equation}
for every $x\in\partial A$, and
\begin{equation}
\label{eq:heat-zero-localization}
\{\psi=0\}
\subset
\{x\in M:d_g(x,\partial A)<R_0\sqrt t\}.
\end{equation}
\end{lemma}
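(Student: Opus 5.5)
The plan is to argue by contradiction and compactness in the parabolic rescaling $z\mapsto\exp_x(\sqrt t\,\iota z)$. The tools are the rescaled heat convergence \eqref{eq:rescaled-heat-convergence}, the off-diagonal bound \eqref{eq:heat-off-diagonal-general}, and Bär's local measure estimate for functions with quantitatively finite vanishing order.

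\textbf{The localization \eqref{eq:heat-zero-localization}.} Suppose $d_g(y,\partial A)\geq R_0\sqrt t$. Then $\vartheta_A$ is constant, say equal to $\pm1$, on $B_{R_0\sqrt t}(y)$. Write $H_t\vartheta_A(y)=\pm1+H_t(\vartheta_A\mp1)(y)$; the second term has data supported at distance $\geq R_0\sqrt t$ from $y$. At scale $\rho\sim\sqrt t$ one cannot use the polynomial prefactor in \eqref{eq:heat-off-diagonal-general} directly. Instead use the Gaussian upper bound $H_t(x,y)\leq Ct^{-(n+1)/2}e^{-d^2/(5t)}$ for small $t$, or equivalently \eqref{eq:rescaled-heat-convergence} applied by contradiction. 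This gives $|H_t(\vartheta_A\mp1)(y)|\leq 2\int_{|z|\geq R_0}G+o(1)\leq 1/4$ once $R_0$ is chosen large. Hence $|H_t\vartheta_A(y)|\geq 3/4$, and if $\varepsilon_0<1/4$ then $\psi(y)\neq0$. Here $R_0$ depends only on dimension and $M$.

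\textbf{Uniform quantitative finite vanishing order.} Fix $R$. The claim is that there exist $m$, $c>0$ and $t_0$ such that for all $A\in\mathscr A$, $t<t_0$, $x\in\partial A$ and $w\in B_{2R\sqrt t}(x)$,
\[
\sum_{\ell\le m}t^{\ell/2}|\nabla^\ell H_t\vartheta_A(w)|\geq 2c.
\]
Suppose not. Then there are sequences $A_k$, $t_k\to0$, $x_k\in\partial A_k$, $w_k$ and $m_k\to\infty$ violating the bound with constant $1/k$. Rescale around $x_k$ and set $f_k=\vartheta_{A_k}\circ\Phi_k$. By \eqref{eq:uniform-boundary-density}, the $f_k$ have uniformly locally bounded BV norm on balls of radius $r_0/\sqrt{t_k}\to\infty$: the perimeter in $B_\rho$ is at most $C_0\rho^n$, since rescaling by $\sqrt{t_k}$ preserves this bound. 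So a subsequence converges in $L^1_{\rm loc}$ to some $f_\infty\in\{\pm1\}$.

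Two facts show that $f_\infty$ is nonconstant.
\begin{itemize}
\item The density estimate applies at $x_k\in\partial A_k$, with $A_k$ open. The upper bound \eqref{eq:uniform-boundary-density} alone does not prevent the limit from being constant. I would use the relative isoperimetric inequality: if $f_\infty\equiv1$ on $B_1$, then $|A_k\cap B_{\sqrt{t_k}}|/t_k^{(n+1)/2}\to0$. This alone does not contradict $x_k\in\partial A_k$, since thin spikes are possible.
\item To handle this, I would replace the bare assumption $x\in\partial A$ by working with the limit regardless. The estimate only needs the vanishing-order bound at points where $\psi$ has zeros. So argue at $w_k$ which are zeros of some admissible $\psi_k$. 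Then $H_{t_k}\vartheta_{A_k}(w_k)=O(\varepsilon_0)$, while the same argument as in the localization step gives $|G*f_\infty|$ near $1$ if $f_\infty$ is constant. With $\varepsilon_0$ small this is a contradiction, so $f_\infty$ is nonconstant.
\end{itemize}

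Consequently $u_\infty=G*f_\infty$ is a nonconstant real-analytic function. Its limit derivatives at $w_\infty\in \overline B_{2R}$ vanish to all orders, so by analyticity $u_\infty$ is constant. Then $f_\infty$ is constant, since the heat kernel is injective. This contradicts the previous point.

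\textbf{Transfer to $\psi$ and the measure bound.} Condition \eqref{eq:scaled-heat-perturbation} with $\varepsilon_0<c$ transfers the vanishing-order bound to $\psi$: in rescaled coordinates $\tilde\psi=\psi\circ\Phi$ has $C^{m+1}$ norm bounded on $B_{2R}$ and satisfies $\sum_{\ell\le m}|\nabla^\ell\tilde\psi|\geq c$ there. Bär's Lemma 3 (quantitative form) then gives $\cH^n(\{\tilde\psi=0\}\cap B_R)\leq C(m,c,R)$. Scaling back yields \eqref{eq:local-heat-nodal-area}.

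\textbf{Main obstacle.} The crux is the compactness step: guaranteeing a nondegenerate limit, and making Bär's estimate quantitative, with constants depending only on $m$, $c$ and the $C^{m+1}$ bound.
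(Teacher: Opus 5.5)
Your proposal is correct and follows essentially the same route as the paper: parabolic rescaling, BV compactness from the density bound, $C^\infty_{\mathrm{loc}}$ convergence to Gaussian convolutions, and analyticity to get a uniform finite-order jet bound (you merge the paper's model-family bound and its Claim 2 into one contradiction argument). It then uses transfer to $\psi$ with a quantitative B\"ar-type nodal estimate, which you cite and which the paper proves by hand in Claim 1 via directional derivatives, Rolle's theorem and the area formula, and a Gaussian-tail argument for the localization. The nondegeneracy you single out as the crux is automatic: your jet includes $\ell=0$, so vanishing to all orders at $w_\infty$ gives $G*f_\infty\equiv0$ by analyticity, which is impossible since $\widehat G>0$ and $|f_\infty|=1$ a.e.; hence neither the nonconstancy discussion nor the restriction to zeros of admissible $\psi_k$ is needed.
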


\begin{proof}
Fix $R>0$, and choose
\[
0<\rho<
\frac14\min\{r_0,\operatorname{inj}(M,g)\}.
\]
For $x\in M$, let $\iota_x:\mathbb R^{n+1}\longrightarrow T_xM$
be an orthonormal isometry. For $0<t<1$, define
\[
\Phi_{x,t}(z)
:=
\exp_x(\sqrt t\,\iota_xz),
\qquad
|z|<\frac{\rho}{\sqrt t},
\]
and
\[
g_{x,t}:=
t^{-1}\Phi_{x,t}^*g.
\]
Then
\begin{equation}
\label{eq:rescaled-metric-limit}
g_{x,t}\longrightarrow g_{\mathrm{eucl}}
\end{equation}
in $C^\infty_{\mathrm{loc}}(\mathbb R^{n+1})$ as $t\downarrow0$, uniformly in $x\in M$.

For $A\in\mathscr A$, let
\[
\widehat A_{x,t}
:=
\Phi_{x,t}^{-1}(A)
\]
and
\[
\widehat\vartheta_{A,x,t}
:=
\vartheta_A\circ\Phi_{x,t}
=
1-2\chi_{\widehat A_{x,t}}.
\]
For every fixed $L>0$, \eqref{eq:rescaled-metric-limit} and
\eqref{eq:uniform-boundary-density} imply that, for all sufficiently
small $t>0$,
\begin{equation}
\label{eq:rescaled-boundary-density}
\operatorname{Per}(\widehat A_{x,t};B_s(z))
\leq
C_1s^{n}
\end{equation}
whenever
\[
B_{2s}(z)\subset B_L(0),
\]
where $C_1=C_1(M,g,C_0)$ is independent of $r_0$ and $L$.

Let $\mathscr T_{C_1}$ be the set of all
\[
\vartheta=1-2\chi_E
\in \operatorname{BV}_{\mathrm{loc}}(\mathbb R^{n+1})
\]
such that
\begin{equation}
\label{eq:tangent-BV-bound}
\operatorname{Per}(E;B_s(z))
\leq
C_1s^{n}
\end{equation}
for every $z\in\mathbb R^{n+1}$ and $s>0$. By the BV compactness theorem and lower semicontinuity of
perimeter (see Ambrosio--Fusco--Pallara \cite{AFP}), $\mathscr T_{C_1}$ is compact in
$L^1_{\mathrm{loc}}(\mathbb R^{n+1})$.

For $z\in\mathbb R^{n+1},$ let
\[
G(z):=(4\pi)^{-(n+1)/2}e^{-|z|^2/4}.
\]
Set
\[
\mathscr U_{C_1}
:=
\{G*\vartheta:\vartheta\in\mathscr T_{C_1}\}.
\]
The set $\mathscr U_{C_1}$ is compact in
$C^\infty_{\mathrm{loc}}(\mathbb R^{n+1})$.  Indeed, let $u_k=G*\vartheta_k,$
where $\vartheta_k\in\mathscr T_{C_1}.$ By the compactness of $\mathscr T_{C_1}$ in
$L^1_{\mathrm{loc}}(\mathbb R^{n+1})$, after passing to a subsequence we
may assume that $\vartheta_k\longrightarrow\vartheta$
in $L^1_{\mathrm{loc}}(\mathbb R^{n+1})$ for some $\vartheta\in\mathscr T_{C_1}$.  Fix a compact set
$K\subset\mathbb R^{n+1}$ and a multi-index $\alpha$.  Then
\[
\partial^\alpha u_k-\partial^\alpha(G*\vartheta)
=
\partial^\alpha G*(\vartheta_k-\vartheta).
\]
For $L>0$,
\[
\sup_{x\in K}
\left|
\int_{B_L(0)}
\partial^\alpha G(x-y)
(\vartheta_k-\vartheta)(y)\,dy
\right|
\leq
C_{K,L,\alpha}
\|\vartheta_k-\vartheta\|_{L^1(B_L(0))},
\]
which tends to zero as $k\to\infty$.  Since
$|\vartheta_k-\vartheta|\leq2$ almost everywhere,
\[
\begin{aligned}
\sup_{x\in K}
\left|
\int_{\mathbb R^{n+1}\setminus B_L(0)}
\partial^\alpha G(x-y)
(\vartheta_k-\vartheta)(y)\,dy
\right|
&\leq
2\sup_{x\in K}
\int_{\mathbb R^{n+1}\setminus B_L(0)}
|\partial^\alpha G(x-y)|\,dy,
\end{aligned}
\]
and the right-hand side tends to zero as $L\to\infty$, because every
derivative of $G$ has exponential decay. Hence, $u_k\longrightarrow G*\vartheta$
in $C^\infty_{\mathrm{loc}}(\mathbb R^{n+1})$, and $\mathscr U_{C_1}$ is compact in $C^\infty_{\mathrm{loc}}(\mathbb R^{n+1})$.

Every $u\in\mathscr U_{C_1}$ is real analytic and is not identically
zero. Indeed, if $G*\vartheta\equiv0$, then
taking Fourier transforms gives
$\widehat G\,\widehat\vartheta=0.$
The Fourier transform of the Gaussian is a strictly positive Gaussian,
and in particular $\widehat G$ has no zeros. Hence, $\vartheta=0$. This contradicts
the fact that $|\vartheta|=1$
almost everywhere. Thus, no element of $\mathscr U_{C_1}$ is identically zero.

There are an integer $q\geq1$ and $c>0$ such that
\begin{equation}
\label{eq:model-uniform-jet}
\max_{0\leq\ell\leq q}
|\nabla^\ell u(z)|
\geq c
\end{equation}
for every $u\in\mathscr U_{C_1}$ and
$z\in\overline{B_{R+1}(0)}$. Indeed, otherwise we could find
$u_k\in\mathscr U_{C_1}$ and
$z_k\in\overline{B_{R+1}(0)}$ such that
\[
\max_{0\leq\ell\leq k}
|\nabla^\ell u_k(z_k)|
\longrightarrow0.
\]
By compactness of $\mathscr U_{C_1} \times \overline{B_{R+1}(0)}$, after passing to a subsequence,
$u_k\longrightarrow u
\quad\text{in }C^\infty(\overline{B_{R+3/2}(0)}),$ and
$z_k\longrightarrow z.$ Every derivative of $u$ then vanishes at $z$, so analyticity gives
$u\equiv0$. This contradicts the fact that no element of $\mathscr U_{C_1}$ is identically zero.

Compactness also gives
\begin{equation}
\label{eq:model-upper-jet}
M_*:=
\sup_{u\in\mathscr U_{C_1}}
\|u\|_{C^{q+1}(B_{R+1}(0))}
<\infty.
\end{equation}
\textbf{Claim 1.} There are $\varepsilon_*>0$ and $C_*>0$, depending only on
$n,C_1$ and $R$, such that every
\[
w\in C^\infty(B_{R+1}(0))
\]
satisfying
\begin{equation}
\label{eq:model-neighborhood}
\inf_{u\in\mathscr U_{C_1}}
\|w-u\|_{C^{q+1}(B_{R+1}(0))}
<\varepsilon_*
\end{equation}
also satisfies
\begin{equation}
\label{eq:compact-model-nodal-bound}
\cH^{n}(\{w=0\}\cap B_R(0))
\leq C_*.
\end{equation}

\begin{proof}[Proof of Claim 1]
To prove this, first note the following elementary fact.  For every
$1\leq\ell\leq q$, there is $b_\ell=b_\ell(n)>0$ such that for
every nonzero symmetric $\ell$-tensor $T$ on $\mathbb R^{n+1}$ one
can find an orthonormal basis $e_1,\ldots,e_{n+1}$ satisfying
\[
|T[e_i,\ldots,e_i]|
\geq b_\ell |T|,
\]
for every $i=1,\ldots,n+1.$ Indeed, for $T\neq0$ the homogeneous polynomial
\[
v\longmapsto T[v,\ldots,v]
\]
does not vanish identically on the unit sphere, so one can choose an
orthonormal basis on which none of these values vanishes.  The
assertion then follows by compactness of the unit sphere in the
finite-dimensional space of symmetric $\ell$-tensors.  Set
\[
b:=\min_{1\leq\ell\leq q}b_\ell>0.
\]

Choose
\[
0<\varepsilon_*<\min\{1,c/4\}.
\]
Suppose that $w$ satisfies
\eqref{eq:model-neighborhood}, and choose
$u\in\mathscr U_{C_1}$ such that
\[
\|w-u\|_{C^{q+1}(B_{R+1}(0))}<\varepsilon_*.
\]
Let $z\in\{w=0\}\cap B_R(0)$.  Since
\[
|u(z)|<\varepsilon_*<c
\]
and \eqref{eq:model-uniform-jet} holds, there is
$1\leq\ell\leq q$ such that
\[
|\nabla^\ell u(z)|\geq c.
\]
Hence, after decreasing $\varepsilon_*$ if necessary,
\[
|\nabla^\ell w(z)|\geq \frac c2.
\]
Choose an orthonormal basis $e_1,\ldots,e_{n+1}$ as above.  Then
\[
|\partial_{e_i}^{\ell}w(z)|
\geq \frac{bc}{2},
\]
for every $i=1,\ldots,n+1.$ Moreover,
\[
\|w\|_{C^{q+1}(B_{R+1}(0))}
\leq M_*+1.
\]
By compactness, there is $0<r_*=r_*(n,q,b,c,M_*)<1/2,$
independent of $w$ and $z$, such that
\begin{equation}
\label{eq:directional-derivative-lower}
|\partial_{e_i}^{\ell}w|
\geq \frac{bc}{4}
\end{equation}
on $B_{2r_*}(z),$ for every $i=1,\ldots,n+1.$

By Rolle's theorem, every line parallel to $e_i$ intersects
\[
\{w=0\}\cap B_{2r_*}(z)
\]
in at most $\ell\leq q$ points.  Since every zero of $w$ has
vanishing order at most $q$, Lemma 3 in B\"ar \cite{Bar}
implies that $\{w=0\}$ is countably
$n$-rectifiable.  Let $\nu$ denote an approximate unit normal
to $\{w=0\}$.  Applying the area formula to the orthogonal
projection onto $e_i^\perp$, we have
\[
\int_{\{w=0\}\cap B_{r_*}(z)}
|\nu\cdot e_i|\,d\cH^{n}
\leq
C(n)q\,r_*^{n}.
\]
Summing over $i$, and using
\[
\sum_{i=1}^{n+1}|\nu\cdot e_i|\geq1,
\]
gives
\begin{equation}
\label{eq:uniform-local-model-area}
\cH^{n}
\bigl(
\{w=0\}\cap B_{r_*}(z)
\bigr)
\leq
C(n,q)r_*^{n}.
\end{equation}

Finally, cover $B_R(0)$ by finitely many balls of radius $r_*/2$.
For each such ball which meets $\{w=0\}$, choose a zero $z$ in
that ball. The ball is then contained in $B_{r_*}(z)$.  Summing
\eqref{eq:uniform-local-model-area} over this finite covering gives
\[
\cH^{n}(\{w=0\}\cap B_R(0))
\leq C_*,
\]
where $C_*=C_*(n,C_1,R)$.  This proves
\eqref{eq:compact-model-nodal-bound}.
\end{proof}
\noindent
\textbf{Claim 2.} There is $t_1>0$ such that, for every
$A\in\mathscr A$, $x\in\partial A$, and $0<t<t_1$,
\begin{equation}
\label{eq:manifold-close-to-model}
\inf_{u_*\in\mathscr U_{C_1}}
\left\|
H_t\vartheta_A\circ\Phi_{x,t}-u_*
\right\|_{C^{q+1}(B_{R+1}(0))}
<
\varepsilon_*/2.
\end{equation}

\begin{proof}[Proof of Claim 2]
Suppose, towards a contradiction, that there are
$t_k\downarrow0$, $A_k\in\mathscr A$, and
$x_k\in\partial A_k$ for which
\eqref{eq:manifold-close-to-model} fails.  Set
\[
\widehat A_k:=\widehat A_{x_k,t_k}.
\]
Set
\[
\widehat\vartheta_k
:=
1-2\chi_{\widehat A_k}.
\]
Since $\rho/\sqrt{t_k}\longrightarrow\infty$, for every $D>0$, the functions $\widehat\vartheta_k$ are defined on
$B_D(0)$ for all sufficiently large $k$.
Thus, by BV compactness (see Ambrosio--Fusco--Pallara \cite{AFP}), after passing to a subsequence,
$\widehat\vartheta_k
\longrightarrow
\vartheta_\infty
$
in $L^1_{\mathrm{loc}}(\mathbb R^{n+1}),$ where
\[
\vartheta_\infty=1-2\chi_{E_\infty}
\]
for some measurable set $E_\infty\subset\mathbb R^{n+1}$. For every $z\in\mathbb R^{n+1}$ and $s>0$,
\eqref{eq:rescaled-boundary-density} applies to $B_s(z)$ for all
sufficiently large $k$. Then lower semicontinuity of perimeter
gives
\[
P(E_\infty;B_s(z))
\leq
\liminf_{k\to\infty}
P(\widehat A_k;B_s(z))
\leq
C_1s^{n}.
\]
Thus $\vartheta_\infty\in\mathscr T_{C_1}.$
Applying \eqref{eq:rescaled-heat-convergence} with
$f_k=\vartheta_{A_k}$ gives
\[
H_{t_k}\vartheta_{A_k}\circ\Phi_{x_k,t_k}
\longrightarrow
G*\vartheta_\infty
\]
in $C^\infty_{\mathrm{loc}}(\mathbb R^{n+1})$. Since $G*\vartheta_\infty\in\mathscr U_{C_1}$, this contradicts the
failure of \eqref{eq:manifold-close-to-model}.  This proves the claim.
\end{proof}

Set $m:=q+1.$ Let
\[
w(z):=\psi(\Phi_{x,t}(z))
\]
and
\[
u(z):=H_t\vartheta_A(\Phi_{x,t}(z)).
\]
By the chain rule,
\begin{equation}
\label{eq:scaled-Cm-comparison}
\|w-u\|_{C^{q+1}(B_{R+1}(0))}
\leq
C_R
\sum_{\ell=0}^{q+1}
t^{\ell/2}
\|\nabla^\ell(\psi-H_t\vartheta_A)\|_{L^\infty(M)}
\end{equation}
for all sufficiently small $t$, where $C_R$ is independent of
$A,x,t$, and $\psi$.

Choose $\varepsilon_0>0$ so small that the right-hand side of
\eqref{eq:scaled-Cm-comparison} is less than
$\varepsilon_*/2$ whenever
\eqref{eq:scaled-heat-perturbation} holds.  Combining this with
\eqref{eq:manifold-close-to-model}, we obtain
\[
\inf_{v\in\mathscr U_{C_1}}
\|w-v\|_{C^{q+1}(B_{R+1}(0))}
<
\varepsilon_*.
\]
Therefore \eqref{eq:compact-model-nodal-bound} gives
\[
\cH^{n}_{\mathrm{eucl}}
(\{w=0\}\cap B_R(0))
\leq C_*.
\]

For $t$ sufficiently small, using \eqref{eq:rescaled-metric-limit}, we obtain
\[
\cH^{n}
\bigl(
\{\psi=0\}\cap B_{R\sqrt t}(x)
\bigr)
=
t^{n/2}
\cH^{n}_{g_{x,t}}
\bigl(
\{w=0\}\cap B_R(0)
\bigr)
\leq
C t^{n/2}.
\]
This proves \eqref{eq:local-heat-nodal-area}.

We finally prove \eqref{eq:heat-zero-localization}.  Choose $R_0>1$
so large that
\begin{equation}
\label{eq:gaussian-tail-choice}
\int_{\mathbb R^{n+1}\setminus B_{R_0}(0)}G(z)\,dz
<
\frac{1}{16}.
\end{equation}
We claim that there exists $t_2>0$ such that, for every
$A\in\mathscr A$, $0<t<t_2$, and $y\in M$ satisfying
\[
d_g(y,\partial A)\geq R_0\sqrt t,
\]
one has
\begin{equation}
\label{eq:heat-sign-away-boundary}
|H_t\vartheta_A(y)-\vartheta_A(y)|
<
\frac14.
\end{equation}

Suppose that \eqref{eq:heat-sign-away-boundary} fails.  Then there are
$t_k\downarrow0$, $A_k\in\mathscr A$, and $y_k\in M$ such that
\[
d_g(y_k,\partial A_k)\geq R_0\sqrt{t_k}
\]
and, after passing to a subsequence,
$\vartheta_{A_k}=\sigma$ a.e. on $B_{R_0\sqrt{t_k}}(y_k)$
for some fixed $\sigma\in\{-1,1\}$, while
\[
|H_{t_k}\vartheta_{A_k}(y_k)-\sigma|
\geq\frac14.
\]
Choose orthonormal isometries
\[
\iota_k:\mathbb R^{n+1}\longrightarrow T_{y_k}M
\]
and set
\[
\Phi_k(z)
:=
\exp_{y_k}(\sqrt{t_k}\,\iota_k z).
\]
By BV compactness (see Ambrosio--Fusco--Pallara \cite{AFP}), after passing to a further
subsequence,
\[
\vartheta_{A_k}\circ\Phi_k
\longrightarrow
\vartheta_\infty
\]
in $L^1_{\mathrm{loc}}(\mathbb R^{n+1}),$
where $|\vartheta_\infty|=1$ almost everywhere.  Since
$\vartheta_{A_k}\circ\Phi_k=\sigma$
a.e. on $B_{R_0}(0),$
we also have
$\vartheta_\infty=\sigma$
a.e. on $B_{R_0}(0).$
Therefore
\[
|(G*\vartheta_\infty)(0)-\sigma|
=
\left|
\int_{\mathbb R^{n+1}}
G(z)\bigl(\vartheta_\infty(-z)-\sigma\bigr)\,dz
\right|
\leq
2\int_{\mathbb R^{n+1}\setminus B_{R_0}(0)}G(z)\,dz
<
\frac18.
\]
On the other hand, \eqref{eq:rescaled-heat-convergence}, applied with
$f_k=\vartheta_{A_k}$, gives
\[
H_{t_k}\vartheta_{A_k}(y_k)
=
\bigl(H_{t_k}\vartheta_{A_k}\circ\Phi_k\bigr)(0)
\longrightarrow
(G*\vartheta_\infty)(0).
\]
Hence, for all sufficiently large $k$,
\[
|H_{t_k}\vartheta_{A_k}(y_k)-\sigma|
<
\frac14,
\]
a contradiction.  This proves \eqref{eq:heat-sign-away-boundary}.

Finally, decrease $\varepsilon_0$, if necessary, so that
\eqref{eq:scaled-heat-perturbation} implies
\[
\|\psi-H_t\vartheta_A\|_{L^\infty(M)}
<
\frac14.
\]
If $d_g(y,\partial A)\geq R_0\sqrt t$, then
\[
|\psi(y)-\vartheta_A(y)|
\leq
|\psi(y)-H_t\vartheta_A(y)|
+
|H_t\vartheta_A(y)-\vartheta_A(y)|
<
\frac12.
\]
Since $|\vartheta_A(y)|=1$, it follows that $\psi(y)\neq0$.  Therefore
\[
\{\psi=0\}
\subset
\{y\in M:d_g(y,\partial A)<R_0\sqrt t\},
\]
which proves \eqref{eq:heat-zero-localization} and completes the proof of the lemma.
\end{proof}

Set
\[
\mathscr G_\tau:=\bigcup_{i=0}^j\Gamma_{v_i}.
\]

For
\[
\epsilon=(\epsilon_0,\ldots,\epsilon_j)
\in\{-1,1\}^{j+1},
\]
set
\[
P_{\xi,\epsilon}(Y)
:=
\sum_{i=0}^j
\lambda_i(\xi)\epsilon_iY^{c(v_i)}
\]
and
\[
\mathcal R_{\xi,\epsilon}
:=
\{s\in[0,1]:P_{\xi,\epsilon}(e^s)=0\}.
\]
Since the colors of the vertices of $\tau$ are distinct,
$P_{\xi,\epsilon}$ is a nonzero polynomial of degree at most $p$.
Therefore
\begin{equation}
\label{eq:number-selector-levels}
\sum_{\epsilon\in\{-1,1\}^{j+1}}
\#\mathcal R_{\xi,\epsilon}
\leq p2^{p+1}.
\end{equation}
Note that
\begin{equation}
\label{eq:selector-boundary-containment}
\partial A_\xi
\subset
\mathscr G_\tau
\cup
\bigcup_{\epsilon\in\{-1,1\}^{j+1}}
\bigcup_{s\in\mathcal R_{\xi,\epsilon}}h^{-1}(s).
\end{equation}

We will apply Lemma \ref{lem:stable-heat-rounding} to
\[
\mathscr A_\tau:=\{A_\xi:\xi\in\tau\}.
\]
\noindent
\textbf{Claim 3.} There are $C_p>0$ and $r_\tau>0$ such that
\begin{equation}
\label{eq:selector-boundary-density}
\cH^{n}(\partial A_\xi\cap B_r(x))
\leq C_pr^{n}
\end{equation}
for every $\xi\in\tau$, $x\in M$, and $0<r<r_\tau$.
\begin{proof}[Proof of Claim 3]
By \eqref{eq:selector-boundary-containment} and
\eqref{eq:number-selector-levels}, every boundary is contained in the
union of $\mathscr G_\tau$ and at most $p2^{p+1}$ level sets of
$h$. In sufficiently small balls, each smooth piece of
$\mathscr G_\tau$ has area at most $C_n r^{n}$.  By the property (i) in
Proposition~\ref{prop:dey-almost-smooth-discretization}, at most $n+1$
distinct pieces intersect at one point. After decreasing $r_\tau$, the
same bound holds for their union. Applying Lemma 3 in B\"ar \cite{Bar}
(see also Proposition 1 in Beck--Becker-Kahn--Hanin \cite{BBKH}) to the compact family
$\{h-s:s\in[0,1]\}$, whose vanishing order is at most two, we have that
the level sets of $h$ satisfy the
same estimate uniformly in the level. This proves the claim.
\end{proof}

\noindent
\textbf{Claim 4.} There is a function
$e_\tau(r)\to0$ as $r\downarrow0$ such that, for every
$\xi\in\tau$ and all sufficiently small $r>0$, the set
\[
\partial A_\xi
\cap
\{x\in M:d_g(x,U_\tau)\leq R_0r\}
\]
can be covered by $N_\xi(r)$ balls of radius $2r$, centered at
points of $\partial A_\xi$, with
\begin{equation}
\label{eq:selector-boundary-cover}
N_\xi(r)r^{n}
\leq
C_p
\sum_{i=0}^j
\cH^{n}(\Gamma_{v_i}\cap U_\tau)
+
C_p
\sup_{s\in[0,1]}
\cH^{n}(h^{-1}(s)\cap U_\tau)
+
e_\tau(r).
\end{equation}

\begin{proof}[Proof of Claim 4]
We prove this estimate first for the pieces of $\mathscr G_\tau$.
Remove an $r$-neighborhood of the boundaries and the nontrivial
intersections of the smooth pieces.  On the remaining part inside
$U_\tau$, a maximal $r/8$-separated family has pairwise disjoint
balls of radius $r/16$.  After decreasing $r$, every
such ball contains at least $c_n r^{n}$ area of the corresponding
smooth piece.  Hence the number of these balls, multiplied by
$r^{n}$, is bounded by
\[
C_n \sum_{i=0}^j
\cH^{n}(\Gamma_{v_i}\cap U_\tau).
\]
The removed set is contained in an $r$-neighborhood of a finite union
of smooth sets of dimension at most $n-1$, and therefore it can be
covered by balls whose number, multiplied by $r^{n}$, tends to
zero.

It remains to consider the part lying in the $R_0r$-neighborhood
of $U_\tau$ but outside $U_\tau$.  Since
$\partial U_\tau$ is transverse to every smooth hypersurface piece
occurring in $\mathscr G_\tau$, compactness and the tubular
neighborhood theorem give
\[
\sum_{i=0}^j
\cH^{n}
\left(
\Gamma_{v_i}\cap
\{x:d_g(x,\partial U_\tau)<R_0r\}
\right)
\longrightarrow0
\]
as $r\downarrow0$.

We argue similarly for the level sets of $h$.  Inside $U_\tau$,
cover first the $Ar$-neighborhoods of the finitely many critical
points of $h$ by a bounded number of $r$-balls.  Here $A$ is
fixed sufficiently large, independently of the level and of $r$.
On the complement of these neighborhoods, the same covering argument applies.
In a Morse chart the second fundamental form of a
regular level is bounded by
\[
\frac{C}{d_g(\,\cdot\,,\operatorname{Crit}h)},
\]
while away from the Morse charts it is uniformly bounded.  Thus, after
first choosing $A$ large and then decreasing $r$, every ball of
radius $r/16$, centered on the level, contains at least
$c_n r^{n}$ area of that level.  Consequently
$h^{-1}(s)\cap U_\tau$ can be covered by $N_s(r)$ balls of radius
$r$ with
\begin{equation}
\label{eq:single-level-cover-step3}
N_s(r)r^{n}
\leq
C_n\cH^{n}(h^{-1}(s)\cap U_\tau)
+o_\tau(1),
\end{equation}
where the error tends to zero uniformly in $s\in[0,1]$.

The same is true in the neighborhood of $\partial U_\tau$.
Indeed, $dh\neq0$ on $\partial U_\tau$, and
$h|_{\partial U_\tau}$ is Morse.  Away from the finitely many
critical points of $h|_{\partial U_\tau}$, the level sets of $h$
meet the boundary transversely, and their
$n$-dimensional measure in a collar of width $R_0r$ tends to
zero uniformly in the level.  Near a critical point of
$h|_{\partial U_\tau}$, the same conclusion follows from the Morse
lemma.  Hence
\[
\sup_{s\in[0,1]}
\cH^{n}
\left(
h^{-1}(s)\cap
\{x:d_g(x,\partial U_\tau)<R_0r\}
\right)
\longrightarrow0.
\]

By \eqref{eq:number-selector-levels}, only a bounded number depending
on $p$ of the level sets above occur in
$\partial A_\xi$.  Combining their covers with the cover of
$\mathscr G_\tau$, discarding every ball which does not meet
$\partial A_\xi$, and recentering each remaining ball at a point of
$\partial A_\xi$, proves
\eqref{eq:selector-boundary-cover}.
\end{proof}

Set $r:=\sqrt t.$
For this fixed $t$, condition
\eqref{eq:scaled-heat-perturbation} holds with $A=A_{\xi_k}$
and $\psi=\psi_k$
for all sufficiently large $k$, because
\[
\psi_k-F_t(\xi_k)\longrightarrow0
\]
in $C^\infty(M)$.  By
\eqref{eq:heat-zero-localization},
\[
\{\psi_k=0\}\cap U_\tau
\subset
\partial A_{\xi_k}
\cap
\{x:d_g(x,U_\tau)\leq R_0r\}.
\]
Let
$B_{2r}(x_{k,1}),\ldots,B_{2r}(x_{k,N_k})$
be the covering given by
\eqref{eq:selector-boundary-cover}.  Then
\[
\{\psi_k=0\}\cap U_\tau
\subset
\bigcup_{\ell=1}^{N_k}
B_{(R_0+2)r}(x_{k,\ell}).
\]
Since every center $x_{k,\ell}$ belongs to
$\partial A_{\xi_k}$, Lemma~\ref{lem:stable-heat-rounding}, applied
with $R=R_0+2$, gives
\[
\cH^{n}
\bigl(
\{\psi_k=0\}\cap
B_{(R_0+2)r}(x_{k,\ell})
\bigr)
\leq
C_pr^{n}.
\]
Summing over $\ell$, and using
\eqref{eq:selector-boundary-cover} and
\eqref{eq:almost-smooth-local-area}, we obtain
\begin{equation}
\label{eq:inside-contribution}
\limsup_{k\to\infty}
\cH^{n}
\bigl(
\{\psi_k=0\}\cap U_\tau
\bigr)
\leq
C_p\delta+o_\tau(t),
\end{equation}
where $o_\tau(t)\to0$ as $t\downarrow0$.

Combining \eqref{eq:outside-contribution},
\eqref{eq:inside-contribution}, and
\eqref{eq:almost-smooth-global-area}, gives
\[
\limsup_{k\to\infty}
\cH^{n}(\{\psi_k=0\})
\leq
\cH^{n}
\bigl(
\Gamma_v\cap(M\setminus U_\tau)
\bigr)
+
C_p\delta
+
o_\tau(t)
\leq
L+C_p\delta+o_\tau(t).
\]
Since there are only finitely many simplices, we may choose $t>0$
so small that all the preceding small-$t$ requirements hold
simultaneously and
$\max_\tau o_\tau(t)<\kappa.$
Then
\[
\limsup_{k\to\infty}
\cH^{n}(\{\psi_k=0\})
\leq
L+C_p\delta+\kappa,
\]
which proves \eqref{eq:stable-heat-nodal-bound} and completes Step~3.

\medskip
\noindent
\textbf{Step 4: spectral projection.}
Choose $\delta,\kappa>0$ so small that
\[
C_p\delta+2\kappa<\eta,
\]
and choose $t>0$ as in Step~3 for this value of $\kappa$.

Let $P_N:L^2(M)\longrightarrow E_N(g)$
be the $L^2$-orthogonal projection.  Since
$F_t(\widetilde X)$ is compact in $C^\infty(M)$,
\begin{equation}
\label{eq:uniform-spectral-convergence}
\sup_{\xi\in\widetilde X}
\|P_NF_t(\xi)-F_t(\xi)\|_{C^m(M)}
\longrightarrow0
\end{equation}
for every $m\geq0$.

Since $F_t(\xi)\neq0$ for every $\xi$ and $\widetilde X$ is compact,
\[
\min_{\xi\in\widetilde X}\|F_t(\xi)\|_{L^2}>0.
\]
Thus, by \eqref{eq:uniform-spectral-convergence},
$P_NF_t(\xi)\neq0$
for every $\xi\in\widetilde X$ and all sufficiently large $N$.

Let $N_k\to\infty$ and choose arbitrary
$\xi_k\in\widetilde X$.  After passing to a subsequence, we may
assume that $\xi_k\longrightarrow\xi\in\widetilde X.$
Set $\psi_k:=P_{N_k}F_t(\xi_k).$
By \eqref{eq:uniform-spectral-convergence},
\[
\|\psi_k-F_t(\xi_k)\|_{C^m(M)}
\longrightarrow0
\]
for every $m$.  Hence \eqref{eq:stable-heat-nodal-bound} gives
\[
\limsup_{k\to\infty}
\cH^{n}(\{\psi_k=0\})
\leq
L+C_p\delta+\kappa.
\]
Since the sequences $N_k\to\infty$ and
$\xi_k\in\widetilde X$ were arbitrary, it follows that
\[
\limsup_{N\to\infty}
\sup_{\xi\in\widetilde X}
\cH^{n}
\bigl(
\{P_NF_t(\xi)=0\}
\bigr)
\leq
L+C_p\delta+\kappa.
\]
Therefore, for all sufficiently large $N$,
\begin{equation}
\label{eq:spectral-nodal-bound}
\sup_{\xi\in\widetilde X}
\cH^{n}
\bigl(
\{P_NF_t(\xi)=0\}
\bigr)
\leq
L+C_p\delta+2\kappa.
\end{equation}

Since $F_t$ is odd and $P_N$ is linear, the map
\[
\widetilde\Psi:\widetilde X\longrightarrow E_N(g)\setminus\{0\},
\qquad
\widetilde\Psi(\xi):=P_NF_t(\xi),
\]
is odd. It therefore defines a continuous map $\Psi:X\longrightarrow\mathbb P(E_N(g)).$
Moreover, $\Psi^*(a_N)=\alpha,$
and hence $\Psi^*(a_N^p)=\alpha^p\neq0.$
Thus $\Psi\in\mathcal N_p^N(g).$
By \eqref{eq:spectral-nodal-bound},
\[
\nu_p^N(M,g)
\leq
L+C_p\delta+2\kappa
<
L+\eta
\leq
\omega_p(M,g)+2\eta.
\]
Since $E_N(g)\subset E_{N+1}(g)$, the sequence
$\nu_p^N(M,g)$ is nonincreasing.  Therefore
\[
\lim_{N\to\infty}\nu_p^N(M,g)
\leq
\omega_p(M,g)+2\eta.
\]
Taking $\eta\downarrow0$ completes the proof of the theorem.
\end{proof}

\vspace{0.5cm}
\noindent Department of Mathematics, University of Toronto, Toronto, Canada\\
\textit{E-mail address}: \texttt{talant.talipov@mail.utoronto.ca}

\end{document}